\renewcommand\section{\@startsection{section}{1}{\z@}%
 						{-3.5ex \@plus -1ex \@minus -.2ex}
						{2ex \@plus.2ex}
						{\large\bfseries}}
\renewcommand\subsection{\@ifstar
						{\setcounter{subsection}{\value{equation}}
					\@startsection{subsection}{2}{\z@}
                          {1.75ex \@plus.5ex \@minus.2ex}%
                           {-.4em}		
					\textit*}
					{\setcounter{subsection}{\value{equation}}
						\stepcounter{equation}
					\@startsection{subsection}{2}{\z@}
                          {1.75ex \@plus.5ex \@minus.2ex}%
                           {-.4em}		
					\textit}}
\def\@seccntformat#1{\@ifundefined{#1@cntformat}%
	{\csname the#1\endcsname\quad} 
	{\csname #1@cntformat\endcsname}} 
\def\section@cntformat{\thesection.~} 
\def\subsection@cntformat{(\thesubsection)\ }
\renewcommand*\l@section{\mdseries\small\@dottedtocline{1}{1.5em}{2em}}
\numberwithin{equation}{section}
\theoremstyle{plain}
\newtheorem{maintheorem}{Theorem}
\newtheorem{corollary}[equation]{Corollary}
\newtheorem{lemma}[equation]{Lemma}
\newtheorem{proposition}[equation]{Proposition}
\theoremstyle{definition}
\theoremstyle{remark}
\newtheorem{remark}[equation]{Remark}
\newcommand{\cA}{\mathscr{A}}
\newcommand{\cC}{\mathscr{C}}
\newcommand{\cI}{\mathscr{I}}
\newcommand{\cN}{\mathscr{N}}
\newcommand{\cO}{\mathscr{O}}
\newcommand{\frg}{\mathfrak{g}}
\newcommand{\frh}{\mathfrak{h}}
\newcommand{\bC}{\mathbb{C}}
\newcommand{\bP}{\mathbb{P}}
\newcommand{\bR}{\mathbb{R}}
\newcommand{\bZ}{\mathbb{Z}}
\newcommand{\vep}{\varepsilon}
\newcommand{\ddisk}{-\!\!:\!\!-}
\title{\Large\textbf{The r\^{o}le of Coulomb branches in $2D$ gauge theory}}  										
\author{Constantin Teleman\footnote{UC Department of Mathematics, 970 Evans Hall, 
	Berkeley, CA 94720. \texttt{teleman@berkeley.edu}}}
\begin{document}
\maketitle
\begin{abstract}
\noindent
I give a  construction of the \emph{Coulomb branches} $\cC_{3,4}(G;E)$ of gauge theory 
in $3$ and~$4$ dimensions, defined by Nakajima \emph{et al.} \cite{n, bfn} for a compact Lie 
group $G$ and a polarisable quaternionic representation $E$. The manifolds $\cC(G;\mathbf{0})$ 
are abelian group schemes over the bases of regular adjoint $G_\bC$-orbits, respectively conjugacy 
classes, and $\cC(G;E)$ is glued together over the base from two copies of $\cC(G;\mathbf{0})$ shifted 
by a rational Lagrangian section $\vep_V$, representing the Euler class of the 
\emph{index bundle} of a polarisation 
$V$ of $E$. 
Extending the interpretation of $\cC_3(G;\mathbf{0})$ as  ``classifying space'' for topological $2$D 
gauge theories, I characterise  functions on $\cC_3(G;E)$ as operators on the equivariant 
quantum cohomologies of $M\times V$, for compact symplectic $G$-manifolds $M$. The non-commutative 
version has a similar description in terms of the $\Gamma$-class of $V$.
\end{abstract}

\section{Introduction}

Associated to a compact connected Lie group $G$ and a quaternionic representation $E$, 
there are expected to be \emph{Coulomb branches} $\cC_{3,4}(G;E)$ of $N=4$ SUSY gauge theory in 
dimensions $3$ and $4$, with matter fields in the representation $E$. They ought to be components of 
the moduli space of vacua, representing solutions of the monopole equations with 
singularities. Following early physics leads \cite{seibwit, ch} and more recent calculations \cite{chmz}, 
a precise definition for these spaces was proposed in the series of papers \cite{n, bfn} by Nakajima 
and collaborators in the case when $E$ is \emph{polarisable} (isomorphic to $V\oplus V^\vee$ for 
some complex representation $V$). Abelian groups were handled independently by Bullimore, Dimofte 
and Gaitto \cite{bdg} from a physics perspective, while the case of the zero representation had 
been developed in \cite{bfm}, although only later 
recognised as such \cite{taust, ticm}.  

The $\cC_{3,4}$ are expected to be hyperk\"ahler (insofar as this makes sense for singular spaces), 
with $\cC_3$ carrying an $\mathrm{SU}(2)$ hyperk\"ahler rotation. They are constructed in \cite{bfn} 
as algebraic Poisson spaces, with $\bC^\times$-action in the case of $\cC_3$. We shall rediscover 
them as such in a simpler construction, which illuminates their relevance to $2$-dimensional 
gauge theory: the $\cC_{3,4}$ for polarised  $E$ are built from their more basic versions 
for the zero representation $E=\mathbf{0}$. Specifically, they are affinisations of a 
space constructed by partial identification of two copies of $\cC(G;\mathbf{0})$. 
The identification is implemented by a Lagrangian shift along the fibers of the (Toda) 
integrable system structure of the $\cC_{3,4}$, and its effect is to impose growth conditions, 
selecting a subring of regular functions. The non-commutative versions quantise this 
Lagrangian shift of the $\cC_3$ into conjugation by the $\Gamma$-class of the representation
(respectively, a specialisation of its Jackson-$\Gamma_p$ version for $\cC_4$). 

 The reconstruction results, Theorems~\ref{c3}, \ref{c4} 
and \ref{cnc}, are more elementary than their $2$D gauge theory interpretation, but it is the latter 
which seems to give them meaning. In compromise, I have attempted to isolate the gauge theory comments 
(for which a rigorous treatment has not yet been published) into paragraphs whose omission does not harm 
the remaining mathematics. I have also separated the non-commutative version of the story  
into the final section: its meshing with quantum cohomology theory is still incomplete.

A pedestrian angle on this paper's results is the \emph{abelianisation} underlying the 
calculations --- a reduction to the Cartan subgroup $H$ and Weyl group $W$. This is seen 
in the description of the Euler Lagrangians \eqref{eulerdef} which are used to build the 
``material'' Coulomb branches from $\cC(G;\mathbf{0})$, and is closely related to the 
abelianised index formula in \cite{tw}, 
which ends up governing the Gauged Linear Sigma model (GLSM). Oversimplifying a bit, the 
interesting difference between $G$ and its abelian reduction is already contained in 
$\cC(G;\mathbf{0})$, the effect of adding a polarised representation being captured 
by a calculus reminiscent of toric geometry. Abelianization also has an explicit manifestation, 
similar to the Weyl character formula, in an isomorphism 
\[
\cC_{3,4}(G;E)\cong \cC_{3,4}\left(H;E\ominus (\frg/\frh)^{\oplus 2})\right)/W, 
\]
whenever the formal difference on the right is a genuine representation of $H$; a quick 
argument has been included in the appendix, as it appeared not to be well known.

A qualification is in order: the simple characterization applies to the variants of $\cC_{3,4}$ 
enhanced by the (complex) \emph{mass parameters} \cite{bdg}, or by the more general \emph{flavor 
symmetries} \cite[3(vii)]{bfn}. The original spaces are subsequently recovered by setting 
the mass parameters to zero; however, at least one parameter, effecting a compactification of 
$V$, must be initially turned on. The moral explanation is easily expressed in physics language, 
and in a way that can be 
made mathematically precise. What my construction does is characterize the $3$-dimensional 
topological gauge theories underlying the $\cC_{3,4}$ by means of their $2$D \emph{topological 
boundary theories} --- a characterization accurate enough, at least, to determine their expected 
Coulomb branches. For pure gauge theory ($E=\mathbf{0}$), I explained in \cite[\S6]{ticm} in 
what sense the ($A$-models of) flag varieties of $G$ supply a complete family of boundary 
theories, the Coulomb branch being akin to a direct integral of those: more precisely, 
it has a Lagrangian foliation by the mirrors of flag varieties. With $E$-matter added, 
a new boundary theory, the GLSM of $V$ by $G$ (again in the $A$-version) 
must be introduced as a factor, carrying the action of the matter fields. Since $V$ is not 
compact, this model must be regularized by the inclusion of mass parameters. There 
is a mathematically sound version of this statement: the GLSM is a $2$D TQFT 
over the ring of rational functions in the complex mass parameters, and  has 
singularities at zero mass.   

The same perspective points to a difficulty in extending these constructions when $E$ cannot 
be polarised. There is no \emph{a priori} reason why a $3$D TQFT should be 
characterized by its topological boundary theories; Chern-Simons theory (for general 
levels) is a notorious counter-example \cite{ks}. A $G$-invariant Lagrangian $V\subset E$ 
seems to provide (in addition to the flag varieties) a generating boundary condition for the 
$3$D gauge theory with matter --- specifically, it is a domain wall between $G$-gauge theory 
with and without matter. No substitute is apparent in general. 
Clearly this deserves further thought. One obstacle is that $3$D gauge theory 
gives only a partially defined TQFT, so its mathematical structure is incompletely 
settled, and the list of desiderata for a presumptive reconstruction is not known with clarity.

\begin{quote}\small
\emph{Acknowledgements.} I learned much about Coulomb branches from A.~Braverman, 
T.~Dimofte and H.~Nakajima, some of it during a stay at the Aspen Physics Center. 
The work was partially supported by NSF grants PHY-1066293 (Aspen), DMS--1406056, and 
by All Souls College, Oxford; an early version was first presented at the 2016 Clay 
workshop in Oxford.
\end{quote}

\section{Overview and key examples}
This section reviews the basic ingredients of the story and indicates the construction 
of Coulomb branches using $\mathrm{U}_1$ as an example. The full statements require 
more preparation, and are found in \S\ref{stat}.

\subsection{Background.} 
The complex-algebraic symplectic manifold $\cC_3(G;\mathbf{0})$ was introduced for general $G$ 
in \cite{bfm}; for $G=\mathrm{SU}_n$, it had been studied in \cite{athit}, in the guise of the 
moduli space of 
$\mathrm{SU}_2$ monopoles of charge $n$. The description most relevant for us is $\mathrm{Spec}\, 
H_*^G(\Omega G;\bC)$, the conjugation-equivariant homology of the based loop group $\Omega G$, 
with its Pontryagin product. From here, its r\^ole as a \emph{classifying space} for topological 
$2$-dimensional gauge theories was developed in \cite{taust, ticm}, where the space was denoted 
$\mathrm{BFM}(G^\vee)$. As we now recall, this virtue of $\cC_3(G;\mathbf{0})$ must be read 
in the sense of semiclassical symplectic calculus, and not as a spectral theorem \`a la 
Gelfand-Naimark. It gives the  ``mirror description'' of the gauged $A$-models in $2$ dimensions. 

\subsection{Relation with quantum cohomology.}
A partial summary of the classifying property of $\cC_3(G;\mathbf{0})$ is that its regular 
functions (sometimes called the \emph{ring of chiral operators}) act on the equivariant quantum 
cohomologies $QH^*_G(M)$ of compact $G$-Hamiltonian symplectic manifolds $M$, in a manner 
making the $E_2$ structure\footnote{Understood in the derived sense.} on $QH^*_G(M)$ compatible with 
the $E_3$ structure defined by the Poisson tensor on $\cC_3$. This lays out $QH^*_G(M)$ as a 
sheaf over $\cC_3$, which turns out to have Lagrangian support (Remark~\ref{lag} below). This 
construction generalises Seidel's theorem \cite{seid} on the action of $\pi_1G$ on $QH^*(M)$, 
as well as the shift operators on $QH^*$ and their equivariant extensions \cite{op}. In fact, 
these latter ingredients are the ``leading order'' 
description of the story of \cite{ticm} in the case of torus actions. A similar narrative 
applies to $\cC_4(G;\mathbf{0})$ and equivariant quantum $K$-theory (minding, however, 
the orbifold nature of $\cC_4$ for general $G$, see \S\ref{coul0}) even though the general 
framework for $K$-theoretic mirror symmetry is incompletely understood.

\begin{remark}\label{lag}
The shortest argument for  the Lagrangian property of $QH^*_G(M)$ passes to the non-commutative 
Coulomb branches of \S\ref{noncomgen}, over which the versions of $QH^*_G(M)$ equivariant under 
loop rotation (which are related to \emph{cyclic homology} of the Fukaya category) 
are naturally modules. The Lagrangian property is now a consequence of the integrability of 
characteristics  \cite{gab} supplemented by finiteness of $QH^*_G(M)$ over $H^*(BG)$. 
\end{remark}
 
\subsection{Coulomb branches with matter.}
The universal property of the $\cC(G;\mathbf{0})$ leaves the spaces $\cC(G;E)$ in search of 
a r\^ole. Their new characterisation addresses this riddle. Namely, the Seidel shift 
operators act on $QH^*(M)$ only when $M$ is \emph{compact}; for more general spaces, 
the most we expect is an action on the \emph{symplectic cohomology}, when the latter is defined 
\cite{ritt}. Equivariant symplectic cohomology $SH^*_G(X)$ is sometimes a localisation of $QH^*_G(X)$, 
in which case the space $\cC_3(G;\mathbf{0})$ will capture a dense open part of $QH^*_G(X)$, with 
portions lost at infinity. Notably, this is the case when $X=M\times V$, with compact $M$ and a 
linear $G$-space $V$. The lost part of $QH^*_G(M\times V)$ can be captured in a second chart of 
$\cC(G;\mathbf{0})$, shifted from the original by the effect of the functor $M\mapsto M\times V$. 

This shift is implemented as follows. The tensor product defines a symmetric monoidal structure on 
$2$-dimensional TQFTs with $G$-gauge symmetry. This structure is mirrored in the 
classifying space $\cC_3(G;\mathbf{0})$ into a \emph{multiplication} along an abelian group  
structure over $\mathrm{Spec}\:H_*^G(\mathrm{point})$. (The latter is isomorphic to the space 
$\frg^{reg}_\bC/G_\bC$ of regular adjoint orbits, and the projection exhibits $\cC_3(G;\mathbf{0})$ 
as a fiberwise group-completion of the classical Toda integrable system; see \S\ref{hopf}.) The 
operation $QH^*_G(M)\rightsquigarrow SH^*_G(M\times V)$ is implemented by  multiplication by a 
certain rational Lagrangian section $\vep_V$ of this group scheme, whose structure sheaf is 
$SH^*_G(V)$. The Lagrangian $\vep_V$ should be regarded as the gauged $B$-model mirror of $V$: see Remark~\ref{superpotentials}.

The precise statement of the main results requires preparation and is postponed to 
\S\ref{stat}; the remainder of this section develops two key examples.  

\subsection{Example I: $G=\mathrm{U}_1$, with the standard representation $L$.} 
\label{keyx}
We have 
\begin{equation}\label{u1}
\cC_3(\mathrm{U}_1;\mathbf{0})= \mathrm{Spec}\,H_*^{\mathrm{U}_1}\!\left(\Omega\mathrm{U}_1;\bC\right) = 
\bC\times \bC^\times \cong T^\vee\bC^\times,
\end{equation}
with $\bC^\times$ \emph{dual} to $\mathrm{U}_1$: the coordinates $\tau$ and $z$ on the two 
factors generate $H^2\left(B\mathrm{U}_1\right)$ and $\pi_1\mathrm{U}_1$. The canonical symplectic 
form $d\tau\wedge dz/z$ also admits an intrinsic topological 
definition, in terms of a natural circle action on $B\mathrm{U}_1\times\Omega\mathrm{U}_1$ (cf.~\S\ref{basiccoul} and \S\ref{noncomu1} below). 

One usually defines the \emph{toric mirror} of the space $L$ as the function (\emph{super-potential}) 
$\psi(z)=z$ on the space $\bC^\times$. The differential $d\psi$ 
defines the Lagrangian $\vep_L:=\{\tau=z\} \subset T^\vee\bC^\times$. View $\vep_L$ instead as 
the rational section  $\tau\mapsto z= \tau$ of the projection $T^\vee\bC^\times\to \bC$ to the 
$\tau$-coordinate, and note in 
passing the Legendre transform $\psi^*(\tau) = \tau(\log\tau -1)$ of $\psi$, in the sense that 
$\vep_L = \exp (d\psi^*)$. 

Functions on $\vep_L$ are identified with $\bC[\tau^\pm]$; this is the $\mathrm{U}_1$-equivariant 
symplectic cohomology of $L$, rather than its quantum cohomology $\bC[\tau]$. We can recover 
the full quantum cohomology by gluing, onto the open set $\tau\neq0$ in \eqref{u1}, a second 
copy $T^\vee\bC^\times$, with coordinates $\tau$ and $z'=z/\tau$. This gluing is compatible with 
projection to the $\tau$-coordinate and leads to the space $\bC^2\setminus\{0\}$, with coordinates 
$(x,y) = (z,\tau/z)$, living over the line $\tau=xy$. The section $\vep_L$ closes now to the 
line $y=1$, identified by projection with the full $\tau$-axis. 

In \cite{bdg, bfn}, $\cC_3(\mathrm{U}_1;L\oplus L^\vee)$ is taken to be the affine completion 
$\bC^2=\mathrm{Spec}\,\bC[x,y]$. The following characterisation is now obvious:

\begin{proposition}\label{ring}
$\bC[x,y]$ is the subring of regular functions $f(\tau,z)$ on 
$T^\vee\bC^\times$ with the property that $f(\tau, z\tau)$ is also regular. \qed
\end{proposition}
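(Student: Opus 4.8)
The plan is to recognise the prescribed condition as regularity across the chart-gluing that defines the Coulomb branch, and then to affinise. Regular functions on $T^\vee\bC^\times = \bC\times\bC^\times$ form the ring $\bC[\tau, z^{\pm 1}]$, so I write $f = \sum_{n} a_n(\tau)\,z^n$ with $a_n\in\bC[\tau]$, finitely many nonzero. The transition to the second copy of $T^\vee\bC^\times$ is $z' = z/\tau$, i.e. $z = z'\tau$ (this is the Lagrangian shift $\vep_L$ along the fibres); hence $f$ expressed in the coordinates $(\tau, z')$ of that chart is $f(\tau, z'\tau)$, and, renaming $z'$ back to $z$, the hypothesis that $f(\tau, z\tau)$ be regular says exactly that $f$ extends to a regular function on the second chart. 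As $f$ is already regular on the first chart, the subring in the statement is precisely $\cO(\bC^2\setminus\{0\})$, the regular functions on the glued space.

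It then remains to identify this ring. Because the glued surface is smooth and the removed point has codimension two, every regular function on $\bC^2\setminus\{0\}$ extends across the origin (Hartogs) to the affine completion $\bC^2 = \mathrm{Spec}\,\bC[x,y]$; thus $\cO(\bC^2\setminus\{0\}) = \bC[x,y]$, which is the assertion.

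To make this explicit --- and to sidestep any appeal to extension theorems --- I would verify the equality on monomials. Substituting gives $f(\tau, z\tau) = \sum_n a_n(\tau)\,\tau^n z^n$, which lands in $\bC[\tau, z^{\pm 1}]$ if and only if $a_n(\tau)\,\tau^n\in\bC[\tau]$ for each $n$: automatic for $n\ge 0$, and for $n<0$ equivalent to $\tau^{|n|}\mid a_n(\tau)$. On the other side, under $(x,y)=(z,\tau/z)$ the ring $\bC[x,y]$ is the subring $\bC[z,\tau z^{-1}]$ of $\bC[\tau, z^{\pm 1}]$, with $\bC$-basis the monomials $\tau^j z^{i-j}$ for $i,j\ge 0$, i.e. the $\tau^j z^n$ with $j\ge\max(0,-n)$. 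Reading off the admissible powers of $\tau$ in the coefficient of each $z^n$ reproduces exactly ``$a_n$ arbitrary for $n\ge 0$, and $\tau^{|n|}\mid a_n$ for $n<0$'', so the two descriptions coincide.

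The computation is routine; the one step that needs care is the coordinate bookkeeping of the first paragraph. I must check that $z\mapsto z\tau$ is genuinely the transition function of the gluing that yields $\bC^2\setminus\{0\}$ --- and not its inverse or a differently oriented chart --- and that the deleted locus really is a single point, of codimension two, so that the Hartogs step (or, equivalently, the monomial bound $j\ge\max(0,-n)$) is valid. Once the hypothesis is correctly matched with cross-chart regularity, both the conceptual and the explicit arguments close at once.
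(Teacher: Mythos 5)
Your proposal is correct and follows essentially the route the paper itself takes: the paper declares the proposition ``obvious'' after describing exactly your gluing of two copies of $T^\vee\bC^\times$ along $z'=z/\tau$ into $\bC^2\setminus\{0\}$ with $(x,y)=(z,\tau/z)$, followed by affinisation. Your explicit monomial check ($\tau^{|n|}\mid a_n$ for $n<0$ versus the basis $\tau^jz^{i-j}$ of $\bC[z,\tau z^{-1}]$) is a welcome self-contained verification that the paper omits, and it correctly sidesteps the Hartogs step.
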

 
Our Lagrangian $\vep_L$ is related to the \emph{Euler class of the index bundle} as follows.  
Denote by $\mathfrak{Pic}(\bP^1)$ the moduli stack of holomorphic line bundles on $\bP^1$; its equivariant 
homotopy type is the stack $B\mathrm{U}_1\times\Omega\mathrm{U}_1$ implicit in \eqref{u1}.  
Over $\bP^1\times\mathfrak{Pic}(\bP^1)$ lives the universal line bundle, with fibre the 
standard representation $L$. Its index along $\bP^1$, with a simple vanishing constraint 
at a single marked point, is a virtual bundle  $\mathrm{Ind}_L$ over $\mathfrak{Pic}(\bP^1)$, 
with equivariant Euler class $\mathrm{e}_L \in H^*\left(\mathfrak{Pic}(\bP^1)\right)[\tau^{-1}]$ 
in the localised equivariant cohomology ring. Specifically, $\mathrm{Ind}_L = L^{\oplus n}$ and 
$\mathrm{e}_L=\tau^n$ on the component $\mathfrak{Pic}_n$, $n\in\bZ=\pi_1\mathrm{U}_1$. 
The following is clear from these constructions.

\begin{proposition}\label{euler}
The rational automorphism of multiplication by $\vep_L$ on $T^\vee\bC^\times$, $z\mapsto \tau z$,  
corresponds to the cap-product action of $\mathrm{e}_L$ on $H_*^{\mathrm{U}_1}\!\left
(\Omega\mathrm{U}(1);\bC\right)[\tau^{-1}]$. \qed
\end{proposition}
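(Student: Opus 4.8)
The plan is to make both sides of the asserted correspondence explicit on one and the same $\bC$-basis of $H_*^{\mathrm{U}_1}(\Omega\mathrm{U}_1;\bC)[\tau^{-1}]$ and to check that they agree term by term. First I would pin down the ring model underlying \eqref{u1}: writing $[n]$ for the (Borel--Moore) fundamental class of the component $\mathfrak{Pic}_n\subset\Omega\mathrm{U}_1$ indexed by $n\in\bZ=\pi_1\mathrm{U}_1$, the Pontryagin product satisfies $[m]\cdot[n]=[m+n]$, so with $z:=[1]$ the homology is the free $\bC[\tau]$-algebra $\bC[\tau][z^{\pm 1}]$, where $\tau$ is the image of the generator of $H^2(B\mathrm{U}_1)$. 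The essential structural point is that each component is a single $\mathrm{U}_1$-fixed point, so its equivariant homology is free of rank one over $H^*_{\mathrm{U}_1}(\mathrm{pt})=\bC[\tau]$ with generator $[n]=z^n$; consequently the cap-product action of $\tau$ coincides with honest multiplication by $\tau$ in this ring, and after inverting $\tau$ we obtain $\bC[\tau^{\pm 1},z^{\pm 1}]$, the coordinate ring of the open locus $\tau\neq 0$ in $T^\vee\bC^\times$.

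Next I would compute the geometric side. Translation by the rational section $\vep_L\colon \tau\mapsto z=\tau$ of the group scheme $T^\vee\bC^\times\to\bC_\tau$ acts fibrewise on the $\bC^\times$ with coordinate $z$ by $z\mapsto\tau z$, fixing $\tau$; dualising, the induced automorphism $\Phi^*$ of $\bC[\tau^{\pm 1},z^{\pm 1}]$ fixes $\tau$ and sends $z^n\mapsto(\tau z)^n=\tau^n z^n$, hence $\tau^k z^n\mapsto\tau^{k+n}z^n$. Then I turn to the homology side: by the computation recorded immediately above the statement, $\mathrm{e}_L$ restricts to $\tau^n$ on $\mathfrak{Pic}_n$, so capping the class $\tau^k z^n$ (supported on $\mathfrak{Pic}_n$) with $\mathrm{e}_L$ multiplies it by $\tau^n$, again giving $\tau^{k+n}z^n$. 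The two prescriptions agree on every basis element, which proves the proposition.

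The only genuine subtlety --- and the single place where care is needed --- is bookkeeping rather than geometry: one must work in the (Borel--Moore) model in which each component contributes a \emph{free} rank-one $\bC[\tau]$-module, for only then does the cohomological cap-action of $\tau$ literally become multiplication by $\tau$ in the coordinate ring. In the ordinary homology of $B\mathrm{U}_1$ the class $\tau$ instead acts by a degree-lowering ``divided-power'' shift, and the identification would fail. One should also observe that $\mathrm{e}_L|_{\mathfrak{Pic}_n}=\tau^n$ has poles in $\tau$ for $n<0$, so both the Euler operator and the translation by $\vep_L$ are defined only after inverting $\tau$; this is precisely the localisation $[\tau^{-1}]$ appearing in the statement and the rationality of the section $\vep_L$. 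A final consistency check worth recording is that cap-product by $\mathrm{e}_L$ is multiplicative across components, since $\tau^m\cdot\tau^n=\tau^{m+n}$ matches the addition of winding numbers, which is exactly what makes $\Phi^*$ a ring automorphism of $\bC[\tau^{\pm 1},z^{\pm 1}]$.
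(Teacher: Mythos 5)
Your verification is correct and is exactly the check the paper has in mind: the paper states the proposition with a \qed as ``clear from these constructions,'' having just recorded that $\mathrm{e}_L=\tau^n$ on $\mathfrak{Pic}_n$, and your term-by-term comparison on the basis $\tau^k z^n$ of $\bC[\tau^{\pm1},z^{\pm1}]$ simply spells that out. Your side remarks (the need for the rank-one Borel--Moore convention rather than the divided-power model, and the role of inverting $\tau$ for $n<0$) are accurate and consistent with the paper's setup.
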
 

These propositions capture the r\^{o}le of $\cC(G;L\oplus L^\vee)$ in quantum cohomology: 
the condition of regularity under capping with the Euler class picks out precisely 
those equivariant Seidel shift operators which act on  $QH^*_{\mathrm{U}_1}(L)$. More 
generally, we have the following

\begin{proposition}\label{seidel}
The subring $\bC[x,y]\subset\bC[\tau,z^\pm]$ acts on $QH^*_{\mathrm{U}_1}(M\times L)$ for any 
compact $\mathrm{U}_1$-Hamiltonian symplectic 
manifold $M$, and it is the largest subring with that property.
\end{proposition}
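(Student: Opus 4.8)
My plan is to treat Proposition~\ref{seidel} as the Floer-theoretic enhancement of Proposition~\ref{ring}, the only genuinely new ingredient being the geometric identification of the Seidel shift of $M\times L$.

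\emph{Setting up the operators.} First I would record that $QH^*_{\mathrm{U}_1}(M\times L)\cong QH^*_{\mathrm{U}_1}(M)$ as $\bC[\tau]$-algebras: $L$ is equivariantly contractible and admits no nonconstant holomorphic curves, so the $L$-factor contributes only the scalars $H^*_{\mathrm{U}_1}(L)=\bC[\tau]$ and no quantum corrections. On the compact target $M$ the ring $\bC[\tau,z^\pm]=\cC_3(\mathrm{U}_1;\mathbf{0})$'s functions act with $\tau$ the equivariant parameter and $z$ the invertible Seidel shift $z_M$. The one thing that changes upon passing to $M\times L$ is the shift operator: by the index computation preceding Proposition~\ref{euler} (there $\mathrm{Ind}_L=L^{\oplus n}$ and $\mathrm{e}_L=\tau^n$ on $\mathfrak{Pic}_n$), adjoining the $L$-matter multiplies the degree-$n$ shift by the Euler class $\mathrm{e}_L$, so the generating shift becomes $\tau z_M$. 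Equivalently, the coordinate $z$ now acts as $\tau z_M$, and any $f(\tau,z)$ acts --- when it acts at all --- as $f(\tau,\tau z_M)$.

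\emph{Existence.} For $f\in\bC[x,y]$, Proposition~\ref{ring} gives $f(\tau,\tau z)\in\bC[\tau,z^\pm]$, i.e. $f(\tau,\tau z)=\sum_j\gamma_j(\tau)z^j$ with every $\gamma_j\in\bC[\tau]$. Hence $f(\tau,\tau z_M)=\sum_j\gamma_j(\tau)z_M^{\,j}$ is a finite sum of honest operators --- polynomials in $\tau$ times integer powers of the invertible shift $z_M$ --- and therefore preserves $QH^*_{\mathrm{U}_1}(M)$. Explicitly the generators act as $x=z\mapsto\tau z_M$ and $y=\tau z^{-1}\mapsto z_M^{-1}$, with $xy\mapsto\tau$; this realises the $\bC[x,y]$-action for every compact $M$.

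\emph{Maximality.} Conversely, $f\in\bC[\tau,z^\pm]$ acts on $QH^*_{\mathrm{U}_1}(M\times L)$ precisely when $f(\tau,\tau z_M)$ preserves the lattice $QH^*_{\mathrm{U}_1}(M)\subset QH^*_{\mathrm{U}_1}(M)[\tau^{-1}]$, the latter localisation realising the equivariant symplectic cohomology $SH^*_{\mathrm{U}_1}(M\times L)$. To force $f\in\bC[x,y]$ I would test on an $M$ for which $z_M$ behaves as a free variable. Taking $M=\bP^1$ with a circle action of distinct weights, $QH^*_{\mathrm{U}_1}(\bP^1)$ is free of rank two over $\bC[\tau,z^\pm]$ and $z_{\bP^1}$ is multiplication by the free coordinate $z$; preserving the lattice then says exactly $f(\tau,\tau z)\in\bC[\tau,z^\pm]$, which by Proposition~\ref{ring} means $f\in\bC[x,y]$.

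\emph{Main obstacle.} The delicate point is the choice of test manifold. The naive $M=\mathrm{pt}$ is misleading: there $z_{\mathrm{pt}}=1$, so $f$ acts as $f(\tau,\tau)$ and accidental cancellations (e.g. $z^{-1}-\tau z^{-2}\mapsto 0$) let strictly larger subrings act on $QH^*_{\mathrm{U}_1}(L)$ alone. The force of the word ``largest'' is thus that one quantifies over all $M$; concretely a single target on which the shift acts freely, such as $\bP^1$, already removes the degeneracy and pins the ring to $\bC[x,y]$. The one input I would justify with care is the geometric identification of the $M\times L$ shift with $\tau z_M$ via the Euler class of the index bundle; granting that, the remainder is the bookkeeping of Proposition~\ref{ring}.
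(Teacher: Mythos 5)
Your existence half is sound and is essentially the paper's own argument: the paper likewise identifies the Seidel element of $M\times L$ as the one for $M$ corrected by the Euler class of $\mathrm{Ind}_L$ (a factor $\tau^n$ in degree $n$, appearing as a denominator for $n<0$), so that $x=z$ and $y=\tau z^{-1}$ act, the $\tau$ in $y$ cancelling the denominator. Your preliminary isomorphism $QH^*_{\mathrm{U}_1}(M\times L)\cong QH^*_{\mathrm{U}_1}(M)$ and the substitution $z\mapsto\tau z_M$ are a clean way to package this, and your observation that $M=\mathrm{pt}$ is a degenerate test (so that ``largest'' genuinely requires quantifying over $M$) is a good one --- the paper is silent on this point.

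The maximality step, however, rests on a false premise. $QH^*_{\mathrm{U}_1}(\bP^1)$ is \emph{not} free of rank two over $\bC[\tau,z^\pm]$, and $z_{\bP^1}$ is not a free coordinate: the module is free of rank two over $\bC[\tau]$ only, hence torsion over the two-dimensional ring $\bC[\tau,z^\pm]$, and the Seidel element satisfies a quadratic relation of the shape $z(z-\tau)=q$ over $\bC[\tau]$ --- its support is the mirror Lagrangian of $\bP^1$, exactly as Remark~\ref{lag} requires. Indeed no compact $M$ can serve as ``an $M$ for which $z_M$ behaves as a free variable'', since $QH^*_{\mathrm{U}_1}(M)$ is always finite over $H^*(B\mathrm{U}_1)=\bC[\tau]$. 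The conclusion can still be extracted from $\bP^1$, but only by a different mechanism: one must argue that a $q$-independent Laurent polynomial in $z$ cannot vanish on the fibre of the Lagrangian $z(z-\tau)=q$ over $\tau=0$ (which uses formality or genericity of the Novikov parameter), or else vary the symplectic area, or vary $M$. With $q$ specialised to a fixed value this genuinely fails --- for instance $f=\tau z^{-2}-\tau^3z^{-4}$ lies outside $\bC[x,y]$ yet acts regularly on $QH^*_{\mathrm{U}_1}(\bP^1\times L)$ when $q=1$, because $h^2-1=\tau h$ there --- so the bookkeeping you defer is where the actual content of ``largest'' lives, and your stated justification does not supply it.
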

\begin{proof}
The subring $\bC[\tau]\cong H_*^{\mathrm{U}_1}(\text{point})$ acts in the natural way. 
Recall now (for instance, \cite{op, ir}) that the Seidel element $\sigma_n$ associated with 
$z^n$ (which is a co-character of the original $\mathrm{U}_1$) is the following ``twisted 
$1$-point function": namely, 
the element in $QH^*_{\mathrm{U}_1}(X)$ defined by the evaluation $ev_\infty$ at $\infty$ of 
stable sections of the $X$-bundle over $\bP^1$ associated to  $\cO(-n)$. All is well when $X$ 
is compact: $\sigma_n$ is a unit in $QH^*_{\mathrm{U}_1}(X)$, with inverse $\sigma_{-n}$. 
(Without equivariance, this goes back to Seidel's original paper \cite{seid}.) 
For $X=M\times L$ though, we have a problem when $n<0$: equivariant integration along 
the fibres of $ev_\infty$ incorporates integration along $\mathrm{Ind}_L$, the kernel 
of $H^0\left(\bP^1;\cO(-n) \otimes_{\bC^\times} L\right)\to L$, with dimension $(-n)$; 
the operation contributes its Euler class as a denominator, a factor of $\tau^n$. 
The factor $\tau$ in $y=\tau z^{-1}$ precisely cancels the denominator. \end{proof}
 
\subsection{Generalisation.} 
Propositions~\ref{ring}--\ref{seidel} extend to all $G$ and representations $V$, as 
Theorems~\ref{c3} and~\ref{qh} in \S\ref{stat} below; Theorem~\ref{c4} is the $K$-theory 
analogue. Non-commutative versions of Coulomb branches are described in \S\ref{noncomgen}. 
One required change throughout is the inclusion in the ground ring of an additional 
equivariant parameter $\mu$, from the natural $\bC^\times$-scaling of $V$. The need 
for this will become evident in the example that follows. One can indeed include the 
full $G$-automorphism group of $V$ (the flavor symmetries), but any single scaling 
symmetry that is \emph{compactifying} --- fully expanding or fully contracting --- 
suffices. I will spell out the case of the overall scaling. 
 
\subsection{Example II: $\mathrm{U}_1$ with a general representation $V$.} \label{keyx2}
For a $d$-dimensional representation $V$ of $\mathrm{U}_1$ with weights  $n_1,\dots,n_d\in \bZ$, 
the super-potential $\psi_V:\bC^\times\to \bC$ for its mirror is computed by the following adaptation 
of the Givental-Hori-Vafa recipe.\footnote{The recipe is justified in the SYZ construction by the 
count of holomorphic disks bounding the standard coordinate tori. We are omitting the small quantum 
parameters, one coupled to each coordinate $z_k$.} The defining homomorphism $\rho_V:\mathrm{U}_1\to 
\mathrm{U}_1^d$ of $V$ dualizes to $\rho_V^\vee:(\bC^{\times})^{d}\to \bC^\times$. The standard 
toric super-potential for $\bC^d$ on the source $(\bC^\times)^d$,
\[
\Psi(z_1,\dots,z_d) = z_1+\dots+z_d,
\] 
``pushes down'' to the multi-valued function $\psi_V(z)$ on the target $\bC^\times$ whose 
multi-values are the critical values of $\Psi$ along the fibers of $\rho_V^\vee$. A clean 
restatement is that the Legendre transform $\psi_V^*(\tau)$ is the restriction, under the 
infinitesimal representation $d\rho_V$, of the Legendre transform of $\Psi$: 
in obvious notation, 
\[
\Psi^*(\tau_1,\dots,\tau_d) = \sum\nolimits_k \tau_k(\log\tau_k -1), 
	\quad \psi^*_V = \Psi^*\circ d\rho_V.
\]  

Our Lagrangian $\vep_V$ is the graph of $\exp(d\psi_V^*)$, namely $\tau\mapsto 
z=\prod_k (n_k\tau)^{n_k}$. The reader should meet no difficulty in comparing this 
$\vep_V$ with the Euler class $\mathrm{e}_V$ of the respective index bundle over 
$\mathfrak{Pic}$, as in 
Proposition~\ref{euler}. It should be equally clear how to extend this prescription 
to the case of a higher-rank torus and a general representation.

However, literal application of the lesson from Example~\ref{keyx} runs into trouble, already 
for $\mathrm{U}_1$  with $V =L\oplus L^\vee$. In the GHV construction, the super-potential 
$\Psi=z_1+z_2$ has no critical points along the fibres of $\rho_V (z_{1,2}) = z_1/z_2$. 
We have better luck with the Legendre transform, 
\[
\psi^*_V(\tau) = \tau(\log\tau-1)-\tau(\log(-\tau)-1)= \pi i \tau,
\] 
which identifies $\vep_V$ with the cotangent fibre over $\exp(\pi i)=-1\in \bC^\times$, 
and induces the automorphism 
$z\leftrightarrow (-z)$ of $T^\vee\bC^\times$. While this does match 
Proposition~\ref{euler}, thanks to the Euler class cancellation $\mathrm{e}_{L\oplus L^\vee} 
= \mathrm{e}_{L} \cup\mathrm{e}_{L^\vee} = (-1)^n$ on $\mathfrak{Pic}_n$, raw application of 
Proposition~\ref{ring} would falsely predict that 
$\cC_3(\mathrm{U}_1,V\oplus  V^\vee)=\cC_3(\mathrm{U}_1,\mathbf{0})$,
because $\vep_V$ is now regular.

The remedy incorporates scaling-equivariance into the Euler index class, converting it into 
the $\mu$-homogenized total Chern class. As a Laurent series in $\mu^{-1}$, the latter is defined 
for arbitrary virtual bundles. For the index bundles over $G\ltimes\Omega G$ of representations 
of general compact groups $G$, we will always find \emph{rational functions}. 
With $V=L\oplus L^\vee$, we get $(\mu+\tau)^n(\mu-\tau)^{-n}$ on $\mathfrak{Pic}_n$, 
and the earlier cancellation in the Euler class is now seen to be `fake', arising from 
premature specialisation to $\mu=0$. The Coulomb branch is spelt out in Example~\ref{lplusv}.

Algebraically, $\mu$ is to be treated as an independent parameter. It changes the 
super-potential $\Psi$ by subtracting $\mu\sum\log z_k$; this adds  scale-equivariance 
to the mirror of $\bC^d$. The Legendre transform $\Psi^*$ is modified by the 
substitution $\tau_k\mapsto \tau_k+\mu$, and the topological origin as a scale-equivariant 
promotion of the Chern class is now clearly displayed. 
For a general $V$, the remedied Lagrangian is defined by $z= \prod_k (\mu+n_k\tau)^{n_k}$; 
in particular, it determines the representation. 

Extension to a higher-rank torus, with arbitrary representations, is now a simple matter,  
and it should also be clear how to incorporate the entire \emph{flavor symmetry group} (the 
$G$-automorphism group of $V$), if desired, by equivariant enhancements of the Lie algebra 
coordinates $\tau_k$. There is a characterisation  of $\cC_3$ analogous to 
Proposition~\ref{ring}, as the subring of regular functions on $\cC_3(T;\mathbf{0})$ 
which survive translation by the newly $\mu$-remedied $\vep_V$, and it is easy to  
relate it with the abelian presentations in \cite{n, bdg}. The contribution of 
this paper is the non-abelian generalisation.

\begin{remark}
The remedy of scale-equivariance should not surprise readers versed in 
toric mirror symmetry: na\"ive application of the GHV recipe is  problematic 
for toric actions with non-compact quotients --- which is when our fake cancellations 
can happen --- and the recipe can be corrected by including equivariance under the full torus. 
\end{remark}

\section{Background on Coulomb branches}
\label{coul0}
We recall here the construction and properties of  Coulomb branches; this mostly condenses 
material from \cite{bfm, bf, bfn}. 
I will write $\cC_{3,4}$ for $\cC_{3,4}(G;\mathbf{0})$ when no confusion arises. Denote by 
$H\subset G$ a maximal torus and by $H^\vee, G^\vee$ the Langlands dual groups, $\frg, \frh$ 
the Lie algebras, $W$ the Weyl group.

\subsection{The basic Coulomb branches \cite{bfm}.} \label{basiccoul} 
The space  $\cC_3:= \mathrm{Spec}\,H_*^G(\Omega G;\bC)$ is an affine symplectic resolution of singularities 
of the Weyl quotient $T^\vee H^\vee_\bC/W$. It can be obtained by enlarging the ring of functions on $T^\vee H_\bC^\vee$ 
to include, for all root-coroot pairs $\alpha, \alpha^\vee$ of $G$, those rational functions with principal parts of the 
form $r_\alpha:= (\exp(\alpha^\vee)-1)/\alpha$ along the smooth parts of the root hyperplanes;\footnote{The resulting ring 
includes the functions $r_\alpha$, but is even \emph{larger}, except for the case of $\mathrm{SU}_2$.} this is followed by Weyl division. The $\bC^\times$-action on the cotangent 
fibres arises from the homology grading and scales the symplectic form. The underlying Poisson 
structure is the leading term of a non-commutative deformation over $\bC[h]=H^*(BR)$, 
obtained by incorporating in to $\cC_3$ the equivariance under the loop-rotation circle $R$. 
The loop rotation is revealed by writing $\Omega G\cong LG/G$.

For simply connected $G$, the spectrum of $K^G_*(\Omega G;\bC)$ is also a symplectic manifold, which gives an affine resolution 
of the periodicized cotangent bundle $(H_\bC\times H^\vee_\bC)/W$. This is now similarly accomplished by adjoining rational functions 
with principal parts $(\exp(\alpha^\vee)-1)/(\exp(\alpha)-1)$ along the smooth parts of the hypertori $\exp(\alpha)=1$, 
before Weyl division. However, this space has singularities 
when $\pi_1 G$ has torsion. Write $G=\tilde{G}/\pi$ for the torsion subgroup $\pi\subset\pi_1G$, $H=\tilde{H}/\pi$. 
As a subgroup of $ Z(\tilde{G})$, $\pi$ acts by automorphisms of $K_{\tilde{G}}(X)\otimes\bC$ for 
any $G$-space $X$: to see this, decompose a class in $K_{\tilde{G}}(X)$ into $\pi$-eigen-bundles, 
and multiply each of them by the corresponding character of $\pi$, before re-summing to a complex 
$K$-class. 
We adopt the smooth symplectic orbifold $\pi\ltimes\mathrm{Spec}\, K^{\tilde{G}}_*(\Omega G;\bC)$ 
as the definition of $\cC_4$.

\begin{remark}[Sphere topology]\label{sphere}
Some features of $\cC_{3,4}$ are explained by \emph{Chas-Sullivan theory} in dimension $3$, one 
higher than usual. The underlying topological object is the mapping space from $S^2$ to the 
stack $BG$; it has a natural $E_3$ structure, which turns out to correspond to the Poisson 
form on $\cC_{3,4}(G;\mathbf{0})$. 
Loop rotation is  seen in the presentation as the two-sided groupoid $G\ltimes LG \rtimes G$, 
with Hecke-style product (see Remark~\ref{hecke} below). Tracking the loop rotation breaks $E_3$ 
down to $E_1$, 
because rotating spheres in an ambient $\bR^3$ may be strung together linearly as beads on
the rotation axis, but can 
no longer move around each other. This leads to the non-commutative Coulomb branches we 
shall review in \S\ref{noncomgen}.
\end{remark}

\subsection{Group scheme structure.}\label{hopf}
 The Hopf algebra structures of $H_*^G(\Omega G), K^G_*(\Omega G)$
over the ground rings $H_G^*, K_{G}$ of a point lead to relative abelian group structures 
\begin{equation}\label{intsyst}
\cC_3(G;\mathbf{0})\xrightarrow{\ \chi\ }\frh_\bC/W, \qquad 
	\cC_4(G;\mathbf{0})\xrightarrow{\ \kappa\ }\pi\ltimes (\tilde{H}_\bC/W).
\end{equation}
When $\pi_1G$ has torsion, the second base is an affine orbifold whose ring of functions 
is $K_G(\mathrm{point})$. (The abelian property is a piece of characteristic-zero good 
fortune: the correct commutativity structure is $E_3$, as explained in Remark~\ref{sphere}, 
but this decouples into a strictly commutative and a graded Poisson structure.)
These maps define integrable systems: 
$\chi$ is a partial completion of the classical Toda system\footnote{This was 
rediscovered in \cite{ticm}; I thank H.~Nakajima for pointing 
me to the original reference.} \cite{bf}, whereas $\kappa$ is its finite-difference version.

\begin{remark}[Adjoint and Whittaker descriptions]\label{whittaker}
As an algebraic symplectic manifold, $\cC_3$ is the algebraic symplectic reduction 
$T^\vee_{reg} G_\bC^\vee/\!/G_\bC^\vee$ of the fibrewise-regular part of the cotangent 
bundle under conjugation. There is a simliar description of $\cC_4$ using the 
Langlands dual Kac-Moody group (\emph{not} the loop group of $G^\vee$), 
capturing the holomorphic (but not algebraic) symplectic structure. 

The space $\cC_3$ has another description as the two-sided symplectic reduction 
of $T^\vee G_\bC^\vee$ by $N$, at the regular nilpotent character. Clearly, this is 
algebraic symplectic; much less obviously, it is hyper-K\"ahler, thanks to work of Bielawski 
on the Nahm equation \cite{bi}. 
The non-commutative deformation has a corresponding description in terms of $N\times N$ 
monodromic differential operators on $G^\vee_\bC$ \cite{bf}. 

In both descriptions, multiplication along the group $G^\vee$ induces the group scheme 
structure of \S\ref{hopf}. Commutativity is more evident in the adjoint description, where 
the Toda fibers are the centralizers of regular co-adjoint orbits in $\frg_\bC^\vee$.
\end{remark}

\subsection{Coulomb branches for $E=V\oplus V^\vee$.} \label{LV}
To build the spaces $\cC_{3,4}(G;E)$, we follow  \cite{bfn}, to which we refer for full details,  
and replace $\Omega G$ in the original $\cC$ by a \emph{linear space} $L_V\to\Omega^{a} G$, a stratified 
space whose fibres are vector bundles over the Schubert strata of the algebraic model 
$\Omega^{a}G:=G_\bC(\!(z)\!)/G_\bC[\![z]\!]$ of $\Omega G$. The fibre of $L_V$ over a Laurent 
loop $\gamma\in \Omega^{a} G$ is the kernel of the difference 
\begin{equation}\label{kay}
 \left.L_V\right|_\gamma \to V[\![z]\!] \oplus V[\![z]\!] \xrightarrow{\mathrm{Id}-\gamma} V(\!(z)\!).
\end{equation}
Projection embeds $L_V$ in either factor $V[\![z]\!]$ with finite co-dimension, which is bounded 
on any finite union of strata in $\Omega^a G$. More precisely, the complex \eqref{kay} descends to 
$G[\![z]\!]\backslash \Omega^aG$, with the left and right copies of $G[\![z]\!]$ acting on the 
respective factors $V[\![z]\!]$, and the left one alone acting on $V(\!(z)\!)$. Over any finite 
union of strata, $L_V$ contains two sub-bundles of finite co-dimension, coming from a left and a 
right $z^nV[\![z]\!]$, for sufficiently large $n$. This stratified finiteness lets one define 
the Borel-Moore ($K$-)homologies $BMH^G_*(L_V), BMK^G_*(L_V)$, renormalising the grading as 
if $\dim V[\![z]\!]$ were zero. 

The normalised grading is compatible with the multiplication defined by the following correspondence 
diagram on the fibres of $L_V$, which lives over the multiplication of two loops $\gamma,\delta\in\Omega^a G$:
\begin{equation}\label{correspondence}
\left.L_V\right|_\gamma \oplus  \left.L_V\right|_\delta \quad\leftarrowtail\quad
	\left.L_V\right|_\gamma {\underset{V[\![z]\!]}{\oplus}}  \left.L_V\right|_\delta
 	\quad\rightarrowtail\quad\left.L_V\right|_{\gamma\cdot\delta};
\end{equation}
the sum in the middle is fibered over the right component of $\left.L_V\right|_\gamma$ and the left one of 
$\left.L_V\right|_\delta$, while the right embedding is the projection to the outer $V[\![z]\!]$ summands.
The wrong-way map in homology along the first inclusion is well-defined, over $\gamma,\delta$ in a finite 
range of Schubert cells, after modding out by a common subspace $z^nV[\![z]\!]$, and the result is 
independent of $n$. 

As before, non-commutative deformations arise by including the loop  rotation $R$-action 
on $\Omega G$ and on $V[\![z]\!]$; their leading terms define Poisson structures.

\begin{remark}[$E_3$ Hecke property]\label{hecke}
A Laurent loop defines a transition function for a principal $G_\bC$-bundle over the 
non-separated disk $\ddisk$ with doubled origin. The multiplications have a Hecke interpretation 
as correspondences on $G\ltimes\Omega G$ and $L_V$, induced by following left-to-right the maps 
relating non-separated disks with doubled and tripled centres:
\[
\genfrac{}{}{0pt}{}{(\ddisk)}{(\ddisk)} \quad\overset{g}{-\!\!\!\twoheadrightarrow} \quad(-\raisebox{-1.5pt}{\vdots}-) 
	\quad\overset{i}{\leftarrowtail}\quad (\ddisk)
\]
The map $g$ glues the bottom sheet of the first disk to the top sheet of the second, while $i$ 
hits the outer centres of the triple-centred disk. The $E_3$ property comes from sliding the multiple 
centres around, as in Chas-Sullivan sphere topology. With rotation-equivariance, this freedom is lost 
and we are reduced to an $E_1$ multiplication.

On $G$-bundles, the Hecke operation is represented by multiplication of transition 
functions, once we identify, on the left side, the top bundle on its bottom sheet with the bottom bundle 
on its top sheet. Next, associated to the representation $V$ is a vector bundle over $\ddisk$, whose 
 space of global sections is $L_V$. The correspondence \eqref{correspondence} arises by retaining 
those pairs of global sections on the left which match on the glued pair of sheets, and then 
restricting them to the top and bottom sheets of the triple-centred disk. 
\end{remark}

\subsection{Massive versions.}\label{scale}
We enhance the Coulomb branches by the addition of a symmetry in which $\bC^\times\supset S^1$ 
scales the fibres of $L_V$: 
\[\begin{split}
\cC^\circ_3(G;E)&:=  \mathrm{Spec}\,BMH_*^{G\times S^1}(L_V;\bC),\qquad\text{ projecting to } \frh_\bC/W\times \bC, \\ 
	\cC^\circ_4(G;E) &: = \pi\ltimes\mathrm{Spec}\,BMK_*^{\tilde{G}\times S^1}(L_V;\bC),
		\text{ projecting to } \pi\ltimes (\tilde{H}_\bC/W) \times\bC^\times. 
\end{split}\]
The projections to the massive Toda bases are defined as in \eqref{intsyst}, and denoted  
by $\chi(\mu),  \kappa(m)$, with generators $\mu\in H^2(BS^1), m^\pm\in K_{S^1}(\mathrm{point})$. 
The fibres over fixed values of the parameters $\mu,m$ are total spaces of (usually singular) 
integrable systems; this will follow from flatness of the projections to the Toda bases. 
The scaling  is trivial when $E=\mathbf{0}$ and $L_V=\Omega^aG$, but it will couple to the 
Euler class of the index bundle over $G\ltimes\Omega G$, promoting it to the total Chern class.

The notation is subtly abusive: the $\cC^\circ$ depend on the polarisation $V$ and not just on $E$. 
For instance, switching $V\leftrightarrow V^\vee$ leads an isomorphic space only if we also change 
the orientation of the rotating circle. This $V$-dependence disappears at $\mu=0$ or $m=1$. 
We will see in \S\ref{proofs} that the $\cC^\circ(G;E)$ are flat over $\bC[\mu], 
\bC[m^\pm]$, and that the same spaces $\cC_{3,4}(G;E)$, as defined earlier in 
this section, appear by specialising to $\mu=0$ or $m=1$, independently of the choice of $V$.

\section{Main results}
\label{stat}
We are finally in position to state Theorems~1--3; the non-commutative analogues 
of Theorems~1 and~2 will wait until \S\ref{noncomgen}. First, I describe the Lagrangians 
generalising the massive $\vep_V$ of Example~\ref{keyx}. Their Euler class interpretation, 
already  mentioned following Proposition~\ref{ring},  will be  spelt out in \S\ref{proofs} 
below. 
  
\subsection{The Euler Lagrangians.}\label{eulag}
For $w\in \bC^\times$ and $\nu$ a weight of $H$, $w^\nu:= \exp(\nu\log w)$ determines a point 
in $H_\bC^\vee$. Consider the following rational maps from $\frh_\bC\times\bC$ 
and $H_\bC\times\bC^\times$ to $H_\bC^\vee$, 
defined in terms of the weights $\nu$ of $V$, which are to be included with their multiplicities:
\begin{equation}\label{eulerdef}
\vep_V:(\xi,\mu) \mapsto \prod\nolimits_\nu \left(\mu+\langle\nu|\xi\rangle\right)^\nu, \qquad
	\lambda_V:(x,m)\mapsto\prod\nolimits_{\nu} \left(1-(mx^\nu)^{-1}\right)^\nu.
\end{equation}
(In parsing each formula, note the double use of $\nu$, first as infinitesimal character of 
$H$ and then as co-character of $H^\vee$.) 
The maps are Weyl-equivariant and their graphs are regular, away from a co-dimension~$2$ 
locus over their domains (cf.~\S\ref{codim2} below); their closures define Lagrangian 
sub-varieties $\bar{\vep}_V\subset \cC^\circ_3(G;\mathbf{0})$ and $\bar{\lambda}_V\subset 
\cC^\circ_4(G;\mathbf{0})$ over their respective ground rings $\bC[\mu], \bC[m^\pm]$. 

\begin{remark}[Broader picture]\label{superpotentials}
For generic $\mu$ and $m$ (but most meaningfully, near $\mu, m=\infty$), the maps \eqref{eulerdef} are 
the exponentiated differentials of the following functions, in which $\xi\in \frg_\bC$ and 
$x\in G_\bC$ are the arguments while $\mu, m$ are treated as parameters:
\[
\xi\mapsto\mathrm{Tr}_V \left[(\xi\oplus\mu)\cdot (\log(\xi\oplus\mu) - 1)\right], \qquad 
		x\mapsto \mathrm{Tr}_V\,\mathrm{Li}_2((x\times m)^{-1}).
\]
The first function appeared as the ``$\Sigma\log\Sigma$ Landau-Ginzburg $B$-model 
mirror'' of the abelian GLSM on $V$: \cite{w}, and see also Remark~\ref{stirling}. 
The Lagrangian $\lambda_V$ and its primitive appeared\footnote{For the adjoint 
representation, but the discussion in \emph{loc.~cit.} applies to any $V$.} 
in the index formula for K\"ahler differentials over the moduli of $G$-bundles on curves 
\cite[Eqn.~6.2 and Thm.~6.4]{tw}, with the powers of $m^{-1}$ tracking the degree of the forms. 
The relation with Coulomb branches was not known at the time. Today, we would express that 
index formula in terms of Lagrangian calculus in $\cC_4^\circ(G,\mathbf{0})$, namely the intersection 
of $\lambda_V$ with the graphs of certain isogenies $H_\bC\to H_\bC^\vee$, defined from the 
\emph{levels} of central extensions of the loop group $LG$. 
Those isogenies correspond to the Theta line bundles on the moduli of $G_\bC$-bundles on curves; 
they are semiclassical limits of Theta-functions --- in the same sense that the Lagrangians 
$\vep_V,\lambda_V$ are semiclassical $\Gamma$-functions, see \S\ref{noncomgen} --- and are also twists 
of the unit section by the discrete Toda Hamiltonian of $\cC_4$. 
\end{remark}

\subsection{Algebraic description of the Coulomb branches.} 
The first two results generalise to non-Abelian $G$ the explicit presentations of Coulomb 
branches given in \cite{n, bdg} for torus groups. Their proofs, in \S\ref{proofs}, are 
straightforward; 
more intriguing are the the non-commutative generalisations in \S\ref{noncomgen}. To state 
the theorems, note that translation on the group schemes by the section $\vep_V$, respectively 
$\lambda_V$, gives a rational symplectomorphism of $\cC^\circ_{3}, \cC^\circ_{4}$, relative 
to the massive Toda projection of \S\ref{scale}.

\begin{maintheorem}\label{c3}
The space $\cC^\circ_3(G;E)\to \frh_\bC/W\times \bC$ is the affinisation of two copies of 
$\cC^\circ_3(G;\mathbf{0})$ glued together by means of $\vep_V$-translation. In other words: 
regular functions on $\cC^\circ(G;E)$ are those regular functions on 
$\cC^\circ(G;\mathbf{0})$ which remain regular after translation by $\vep_V$.
\end{maintheorem}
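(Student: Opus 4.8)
The plan is to realise both $\cC^\circ_3(G;\mathbf{0})=\mathrm{Spec}\,H_*^{G\times S^1}(\Omega^a G)$ and $\cC^\circ_3(G;E)=\mathrm{Spec}\,BMH_*^{G\times S^1}(L_V)$ inside a single localisation, and to identify the two natural charts on the latter as two copies of the former glued along $\vep_V$-translation. The linear space $L_V$ of \eqref{kay} carries two projections $p_L,p_R$ onto its left and right factors $V[\![z]\!]$; over any finite union of Schubert strata each is injective of finite codimension, so after quotienting by a common $z^nV[\![z]\!]$ it presents $L_V$ as a finite-rank vector bundle over $\Omega^a G$, away from the codimension-$2$ locus of \S\ref{codim2}. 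In the renormalised grading of \S\ref{LV} the two associated Thom--Gysin maps then become isomorphisms $\Theta_L,\Theta_R$ of the localised renormalised homology of $L_V$ onto the localised $H_*^{G\times S^1}(\Omega^a G)$, each a morphism over the massive Toda base $\frh_\bC/W\times\bC$ of \eqref{intsyst}.

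First I would compute the transition $\Theta_L\circ\Theta_R^{-1}$. Since $p_L$ and $p_R$ differ by the loop $\gamma$ acting through the representation on $V(\!(z)\!)$, the two Thom classes differ by the equivariant Euler class of the $S^1$-scaled index bundle of $V$ over $G\ltimes\Omega G$, so the transition acts as cap product with $\mathrm{e}_V$. By the non-abelian form of Proposition~\ref{euler}, to be established in \S\ref{proofs}, cap product with this Euler class is translation by the section $\vep_V$ of \eqref{eulerdef} along the relative group scheme of \S\ref{hopf}; hence $\Theta_L\circ\Theta_R^{-1}$ is exactly $\vep_V$-translation. To recover the explicit product formula I would pass to the maximal torus by the abelianisation isomorphism (Appendix), whereupon $L_V$ splits over the weights $\nu$ of $V$ into the $\mathrm{U}_1$-models of Examples~\ref{keyx}--\ref{keyx2}, and the per-weight factors $(\mu+\langle\nu|\xi\rangle)^\nu$ multiply to $\vep_V$, the roots of $\frg$ cancelling between the two projections. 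The scaling parameter $\mu\in H^2(BS^1)$ is indispensable here: it promotes the Euler class to the $\mu$-homogenised total Chern class, rendering $\vep_V$ an honest invertible section over $\bC[\mu]$ in place of the fake cancellation at $\mu=0$ noted in Example~\ref{keyx2}.

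With the transition identified, the geometric statement follows by descent: $\Theta_L$ and $\Theta_R$ cover $\cC^\circ_3(G;E)$ by two copies of $\cC^\circ_3(G;\mathbf{0})$ glued on their overlap by $\vep_V$-translation, compatibly with $\chi(\mu)$, and affinising returns $\mathrm{Spec}\,BMH_*^{G\times S^1}(L_V)$. For the ring-theoretic reformulation, a single class of $BMH_*^{G\times S^1}(L_V)$ pushes forward under $p_R$ to a regular function $f$ on $\cC^\circ_3(G;\mathbf{0})$ and under $p_L$ to its $\vep_V$-translate; as both pushforwards land in the integral, non-localised homology, both $f$ and $f\circ\vep_V$ are regular. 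Thus $\Theta_R$ embeds $BMH_*^{G\times S^1}(L_V)$ into the subring of those $f$ whose $\vep_V$-translate is again regular, exactly as in the $\mathrm{U}_1$ verification of Proposition~\ref{ring}.

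The main obstacle is homological bookkeeping, in two parts. First, the Thom maps are built stratum by stratum and must be patched into globally defined isomorphisms whose transition is the single section $\vep_V$, with no spurious contribution from the codimension-$2$ locus where the rank of $L_V$ jumps; the natural tool is the independence of the auxiliary $z^nV[\![z]\!]$ recorded in \S\ref{LV}, together with compatibility of the Thom maps with the wrong-way maps of \eqref{correspondence}. Second, one must prove the reverse inclusion in the ring statement --- that every $f$ with both $f$ and $f\circ\vep_V$ regular lifts to an integral class, i.e.\ that the integral homology is the full intersection of the two Thom lattices. Here I would invoke flatness of $\cC^\circ_3(G;E)$ over $\bC[\mu]$ (\S\ref{flat}) to preclude loss on a lower-dimensional locus, reducing the claim at generic $\mu$ to the torus case, where it factors over weights as the product of copies of Proposition~\ref{ring}.
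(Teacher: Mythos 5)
Your first three paragraphs essentially reproduce the paper's argument for the easy inclusion: the two embeddings $\iota_{l,r}:L_V\hookrightarrow V[\![z]\!]$ give ring maps $\varphi_{l,r}$ into $\bC[\cC^\circ_3(G;\mathbf{0})]$ differing by cap product with $\mathrm{e}_V$, and the identification of $\mathrm{e}_V$-capping with $\vep_V$-translation is made by fixed-point localisation to the maximal torus (the paper's Proposition~\ref{cap} and \S\ref{twoemb}). That much is sound and settles $\bC[\cC^\circ_3(G;E)]\subset A_V$, where $A_V$ is the surviving subring.

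The gap is in your last paragraph, the reverse inclusion $A_V\subset\bC[\cC^\circ_3(G;E)]$. Every identification you propose to use --- the Thom--Gysin maps becoming isomorphisms, the reduction to the torus, the factorisation over weights --- is a \emph{localised} statement, valid only away from the root hyperplanes and away from the singular divisors $S_\nu$ of $\vep_V$. So at best you learn that the two subrings of the localised ring agree away from a divisor in the massive Toda base. Flatness over $\bC[\mu]$ cannot bridge this: two subrings both flat (even free) over a base can agree generically and still differ along a divisor (e.g.\ $\bC[\mu]\cdot\mu x\oplus\bC[\mu]\subset\bC[\mu]x\oplus\bC[\mu]$), since only $\mathrm{Tor}_2$ of the quotient is killed by flatness, which forces agreement only up to codimension~$2$. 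Indeed the paper's own consistency check for $\mathrm{SU}_2$ (\S\ref{su2}) uses exactly this $\mathrm{Tor}_2$ argument, but only \emph{after} separately verifying agreement at the generic points of each divisor ($\tau=0$ with $\mu$ inverted), which your argument does not supply. The paper closes the reverse inclusion differently and non-generically: the Schubert filtration collapses the spectral sequence, and one computes $\mathrm{Gr}_\eta\bC[\cC^\circ_3(G;E)]=BMH_*^{G\times S^1}(L_V|_\eta)=\mathrm{e}_-(\eta)\cap\mathrm{Gr}_\eta\bC[\cC^\circ_3(G;\mathbf{0})]$ exactly, stratum by stratum, because the complement of $L_V|_{C_\eta}$ in the right copy of $V[\![z]\!]$ is precisely $I_-(\eta)$; this is then matched against the symbols surviving translation. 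You would need this integral associated-graded identification (or an equivalent divisor-by-divisor check) to complete the proof.
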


\begin{maintheorem}\label{c4}
The orbifold $\cC^\circ_4(G;E)\to \pi\ltimes (\tilde{H}_\bC/W) 
\times\bC^\times$ is the relative affinisation of two copies of $\cC^\circ_4(G;\mathbf{0})$ 
glued together by means of $\lambda_V$-translation.
\end{maintheorem}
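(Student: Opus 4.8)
The plan is to mirror the proof of Theorem~\ref{c3}, transposing the argument from the additive setting of $BMH_*^{G\times S^1}$ to the multiplicative, finite-difference setting of $BMK_*^{\tilde G\times S^1}$, and then to descend through the $\pi$-action. First I would establish the $K$-theoretic analogue of the Euler-class computation behind Proposition~\ref{euler}: that $\lambda_V$-translation on $\cC^\circ_4(G;\mathbf{0})$ corresponds to cap-product with the $m$-equivariant $K$-theoretic Euler class (the multiplicative Chern class $\prod_\nu(1-(mx^\nu)^{-1})$) of the index bundle $\mathrm{Ind}_V$ over $G\ltimes\Omega^aG$, read along the Schubert stratification as in \S\ref{LV}. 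The stratified finiteness of $L_V$ recalled there is exactly what makes this index bundle virtual-but-well-defined after the normalised grading, so the multiplicative Euler class is a genuine element of the localised ring $BMK_*^{\tilde G\times S^1}(\Omega^aG)[\text{denominators}]$.

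With that correspondence in hand, the core identification is that the wrong-way map along the first inclusion in the correspondence diagram \eqref{correspondence}, which defines the Pontryagin product on $BMK_*^{\tilde G\times S^1}(L_V)$, factors through division by this $K$-theoretic Euler class exactly as the $H_*$ case factors through $\vep_V$-translation. Concretely, I would show that projection of $L_V$ into a single factor $V[\![z]\!]$ realises $BMK_*^{\tilde G\times S^1}(L_V)$ as the subring of $BMK_*^{\tilde G\times S^1}(\Omega^aG)[\text{localised at the index}]$ consisting of classes that stay regular after the $\lambda_V$-shift — the $K$-theoretic reflection of Proposition~\ref{ring}, with the second $V[\![z]\!]$-embedding furnishing the second chart. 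This is where the massiveness matters: by the discussion in \S\ref{scale} the scaling parameter $m$ promotes the naive $K$-theoretic Euler class to the full multiplicative class, so the ``fake cancellation'' flagged in Example~\ref{keyx2} cannot occur and the two charts are genuinely distinct over $\bC[m^\pm]$.

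Finally I would handle the orbifold structure. For simply connected $\tilde G$ the above yields the statement for $\mathrm{Spec}\,BMK_*^{\tilde G\times S^1}(L_V)$ over $\tilde H_\bC/W\times\bC^\times$. To pass to $\cC^\circ_4(G;E)=\pi\ltimes\mathrm{Spec}\,BMK_*^{\tilde G\times S^1}(L_V)$ I would verify that the $\pi$-action by character-twists on $K_{\tilde G}$-classes, described in \S\ref{basiccoul}, commutes with $\lambda_V$-translation — which it does because $\lambda_V$ is built Weyl- and $H$-equivariantly from the weights $\nu$ of $V$, so the twist permutes $\pi$-eigen-bundles compatibly with the multiplicative Euler class. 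The relative affinisation and the gluing then descend to the quotient orbifold, giving the claimed presentation over $\pi\ltimes(\tilde H_\bC/W)\times\bC^\times$.

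I expect the main obstacle to be the rigorous bookkeeping of the $K$-theoretic index bundle and its multiplicative Euler class across the Schubert stratification: unlike the cohomological case, where the Euler class is literally $\prod_\nu(\mu+\langle\nu|\xi\rangle)^\nu$, the $K$-theoretic shift introduces the sign/orientation subtlety noted after \S\ref{scale} (switching $V\leftrightarrow V^\vee$ requires reversing the rotating circle), and one must confirm that the wrong-way map along the non-proper inclusion is well-defined independently of the truncation $z^nV[\![z]\!]$ and that the resulting rational function is genuinely the multiplicative class $\lambda_V$ rather than a non-rational correction. Verifying rationality — guaranteed in principle by the remark in \S\ref{eulag} that index bundles over $G\ltimes\Omega G$ always yield rational functions — is the crux that makes the relative affinisation well-behaved.
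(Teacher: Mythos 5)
Your overall route is the paper's own: \S\ref{proofs} writes out the argument only for $\cC^\circ_3$ and declares the $K$-theory case ``entirely parallel,'' and your main moves --- identifying $\lambda_V$-translation with capping by the $m$-equivariant multiplicative Euler class of $\mathrm{Ind}_V$ via localisation off the root hyperplanes and the abelian computation (the $K$-analogue of Proposition~\ref{cap}), using the two embeddings $\varphi_{l,r}$ induced by $\iota_{l,r}:L_V\hookrightarrow V[\![z]\!]$ with $\varphi_l=\mathrm{e}_V\cap\varphi_r$, and descending through the $\pi$-action --- are exactly the ingredients of \S\ref{twoemb}. This much correctly yields one of the two required inclusions: $\bC[\cC^\circ_4(G;E)]$, embedded by $\varphi_r$, lands inside the subring of $\bC[\cC^\circ_4(G;\mathbf{0})]$ surviving $\lambda_V$-translation (Observation~(i) of \S\ref{proofs}).

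What your plan does not supply is the reverse inclusion --- that nothing else survives --- which is the substantive half of the theorem and appears in your second paragraph only as an assertion (``I would show that projection \dots realises $BMK_*^{\tilde{G}\times S^1}(L_V)$ as the subring \dots consisting of classes that stay regular'') with no mechanism. The paper closes this gap with the Schubert filtration: even-dimensionality collapses the spectral sequence, so it suffices to compare associated graded pieces; at a stratum $C_\eta$ one splits $V=V_+\oplus V_0\oplus V_-$ by the sign of $\langle\nu|\eta\rangle$, factors $\left.\mathrm{e}_V\right|_{z^\eta}=\mathrm{e}_+(\eta)\cdot\mathrm{e}_-(\eta)^{-1}$ (with the $K$-theoretic factors $1-(mx^\nu)^{-1}$ in place of $\mu+\langle\nu|\xi\rangle$), and observes that the fundamental class of $L_V$ over $C_\eta$ is $\mathrm{e}_-(\eta)$ capped on that of $C_\eta$, because $I_-(\eta)$ is precisely the complement of $L_V$ in the right copy of $V[\![z]\!]$. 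Hence $\mathrm{Gr}_\eta\,\bC[\cC^\circ_4(G;E)]=\mathrm{e}_-(\eta)\cap\mathrm{Gr}_\eta\,\bC[\cC^\circ_4(G;\mathbf{0})]$, which is exactly the $\eta$-component of the symbols surviving translation (\S\ref{indexsplit}--\ref{assgr}); this is what forces equality rather than mere containment, and it is also where the flatness over $\bC[m^\pm]$ comes from. Your $\pi$-descent paragraph is fine but easy; the missing stratum-by-stratum computation is the step your proposal needs to be a proof.
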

\noindent
Abstractly, the spaces are the quotients, in affine schemes over the massive Toda bases, of an 
equivalence relation on $\cC^\circ{\scriptstyle \coprod} \cC^\circ$ defined from $\bar{\vep}_V, 
\bar{\lambda}_V$. The relation is not very healthy, being neither proper nor open. 
The surviving condition can equally well be imposed  
prior to Weyl division, and can be restated in terms of growth constraints along the Toda fibres over the smooth parts of the loci of zeroes and 
poles of $\vep_V,\lambda_V$ (\S\ref{codim2} below). 
Meanwhile, the next theorem, characterising the regular functions on $\cC^\circ(G;E)$ in terms of quantum cohomology, is simple 
enough to prove here.

\begin{maintheorem}\label{qh}
$\bC[\cC^\circ_3(G;E)]$ comprises those functions on $\cC^\circ_3$ which act regularly 
on the equivariant quantum cohomologies $QH^*_{G\times S^1}(M\times V)$, for compact 
Hamiltonian $G$-manifolds $M$. \\
$\bC[\cC^\circ_4(G;E)]$ comprises those regular functions on $\cC^\circ_4$ which act on the 
equivariant quantum $K$-theories $QK^*_{G\times S^1}(M\times V)$, for compact Hamiltonian $G$-manifolds $M$. 
\end{maintheorem}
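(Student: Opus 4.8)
The plan is to reduce both assertions to the algebraic descriptions of Theorems~\ref{c3} and~\ref{c4}, by showing that passing from $M$ to $M\times V$ twists the $\cC^\circ_3(G;\mathbf{0})$-action precisely by translation with $\vep_V$. First I would recall, from the classifying property of \cite{ticm,taust} and its $\mathrm{U}_1$ prototype (Proposition~\ref{seidel}), the tautological rational action $\Phi_M$ of $\bC[\cC^\circ_3(G;\mathbf{0})]$ on $QH^*_{G\times S^1}(M)$ for compact Hamiltonian $M$: the base ring $H^*_{G\times S^1}(\mathrm{pt})=\bC[\frh_\bC]^W[\mu]$ acts by scalars, a co-character class $z^\lambda$ acts by the invertible equivariant Seidel operator $\sigma^M_\lambda$, and the resolution functions $(\mathrm{e}^{\alpha^\vee}-1)/\alpha$ by the associated non-abelian shift operators. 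Since $M$ is compact, $\Phi_M$ is denominator-free; moreover, as the flag varieties (and the universal family) of \cite{ticm} realise $\cC_3(G;\mathbf{0})$ faithfully, the collection $\{\Phi_M\}_M$ detects poles, which will supply the maximality half of the statement.

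The crux is the passage to the non-compact target. Because the scaling $S^1$ fixes only the origin of $V$ and $V$ supports no non-constant holomorphic spheres, I would first identify $QH^*_{G\times S^1}(M\times V)$, as a module over the coefficient ring localised at the equivariant Euler class of $V$, with $QH^*_{G\times S^1}(M)$ (equivariant localisation to the fixed locus $M\times\{0\}$); the only genuine $V$-dependence then sits in the shift operators. I would compute $\sigma^{M\times V}_\lambda$ as the evaluation $ev_\infty$ over the moduli of stable sections of the $(M\times V)$-bundle over $\bP^1$ attached to $\lambda$. The $M$-factor returns $\sigma^M_\lambda$, while the $V$-factor contributes integration along the index $\mathrm{Ind}^\lambda_V$ of $\bigoplus_\nu\cO(\langle\nu,\lambda\rangle)\otimes V_\nu$ relative to the fibre constraint at the marked point; its $\mu$-homogenised equivariant Euler class is, by the computation underlying Propositions~\ref{euler}--\ref{seidel} (spelt out for general $G$ in \S\ref{proofs}), exactly $\vep_V^{\,\lambda}$. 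Thus
\[
\sigma^{M\times V}_\lambda \;=\; \Big(\prod\nolimits_\nu (\mu+\langle\nu|\xi\rangle)^{\langle\nu,\lambda\rangle}\Big)\,\sigma^M_\lambda \;=\; \vep_V^{\,\lambda}\,\sigma^M_\lambda .
\]
Since both $\Phi_{M\times V}$ and the composite $\Phi_M\circ t_{\vep_V}^\ast$ (with $t_{\vep_V}$ the rational translation of \S\ref{eulag}) are algebra homomorphisms agreeing on the co-characters and on the base --- and the resolution functions are determined by this data in the localisation --- they coincide on all of $\bC[\cC^\circ_3(G;\mathbf{0})]$: $\Phi_{M\times V}(f)=\Phi_M(t_{\vep_V}^\ast f)$.

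Granting the displayed identity, the theorem follows at once. A regular $f$ acts regularly on $QH^*_{G\times S^1}(M\times V)$ for every compact $M$ precisely when $t_{\vep_V}^\ast f$ is again regular (regularity being both sufficient, as $\Phi_M$ is denominator-free, and necessary, as $\{\Phi_M\}$ detects poles). Intersecting this with the standing hypothesis that $f$ itself be regular recovers exactly the functions that survive $\vep_V$-translation, which by Theorem~\ref{c3} form $\bC[\cC^\circ_3(G;E)]$. The $K$-theoretic assertion runs in parallel: $\Phi_M$ is now the shift action on $QK^*_{G\times S^1}(M)$, the $V$-contribution to $ev_\infty$ is the $K$-theoretic Euler class $\prod_\nu(1-(mx^\nu)^{-1})^{\langle\nu,\lambda\rangle}=\lambda_V^{\,\lambda}$ of \eqref{eulerdef}, and Theorem~\ref{c4} closes the argument, with the customary care for the orbifold factor $\pi$ when $\pi_1G$ has torsion.

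The main obstacle is the middle step: verifying that the $V$-contribution to the Seidel operator over the non-compact fibre is exactly the $\mu$-equivariant (respectively $m$-equivariant $K$-theoretic) Euler class of the index bundle --- with the correct handedness, so that the growth condition falls on the side matching Theorem~\ref{c3} rather than its $V\leftrightarrow V^\vee$ partner --- and, for maximality, that a failure of regularity of $t_{\vep_V}^\ast f$ is actually witnessed by a genuine pole of $\Phi_M(t_{\vep_V}^\ast f)$ for some compact $M$. Both hinge on the index computation together with the faithfulness of the flag-variety family, and it is there that the SYZ/Givental--Hori--Vafa bookkeeping of \S\ref{keyx2} must be made rigorous at the level of virtual fundamental classes.
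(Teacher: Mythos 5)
Your argument is correct and takes essentially the paper's own route: both reduce the statement, via abelianisation and the index-bundle computation of Proposition~\ref{seidel}, to the identification of the $M\mapsto M\times V$ twist of the module structure with $\vep_V$- (resp.\ $\lambda_V$-) translation, and then invoke Theorem~\ref{c3} (resp.\ Theorem~\ref{c4}). The only organisational difference is that the paper extends across the locus where neither abelianisation nor the fixed-point reduction applies by an explicit codimension-$2$ argument using finiteness and freeness of $QH^*_{G\times S^1}(M)$ over the Toda base --- which is precisely what your step ``two algebra homomorphisms agreeing in the localisation coincide'' tacitly requires.
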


\begin{proof}[Proof of Theorem~3]
Away from the root hyperplanes on the massive Toda base (or the singular conjugacy locus, 
respectively), the statement follows by abelianisation from the calculation 
of Proposition~\ref{seidel}. On the other hand, away from $\mu=0$ (or $m=1$), the fixed-point 
theorem allows us to ignore $E$ and $V$, and we are 
reduced to the action of $H_*^G(\Omega G)$ on equivariant quantum cohomology (see \cite{ticm, tprin}). 
The remaining locus has co-dimension $2$ on the base, over which $QH^*_{G\times S^1}(M)$ is 
finite and free as a module.
\end{proof}

\section{Some consequences}
We discuss briefly some geometry of the Coulomb branches as it emerges from their description 
in \S\ref{stat}. Flatness and normality were already established 
in \cite{bfn}; we will review them in the new construction.  

\subsection{Generic geometry of the Coulomb branches.}\label{codim2}
The divisor $S$ of singularities of the section $\vep_V$, resp.\ $\lambda_V$ is the unions of 
hyperplanes $S_\nu$ defined by the monomial factors in \eqref{eulerdef}. The pairwise 
intersections of the $S_\nu$ contain the indeterminacy locus $I$. Away from $I$, each  
$\cC^\circ(G;E)$ is the affinisation of a smooth space, obtained by gluing two open charts 
$\cC^\circ$ with a vertical relative shift over the Toda base. Away from $S$, 
the glued space is of course isomorphic to the original $\cC^\circ$; whereas, 
near each $S_\nu\setminus I$, the Toda fibres undergo a nodal degeneration 
along the $\bC^\times$ factor $\bC^\nu$, modeled on  $\bC^\times \rightsquigarrow 
\bC\sqcup_0 \bC$ in the fibres of the $A_{n-1}$-singularity $(x,y)\mapsto t = (xy)^{1/n}$. 
(The number $n$ is computed from the divisibility and the multiplicities of the weight $\nu$.) 
The appearance of the nodal locus, 
along which $\cC^\circ(G;E)$ is singular when $n>1$, is a consequence of affinisation: 
the smooth charts $\cC^\circ$ cover the complement, as in Example~\ref{keyx}.
From here, Hartog's theorem determines  $\cC^\circ(G;E)$ completely; but we can be more 
specific in concrete cases. Thus, some fibres of $\cC^\circ(G;\mathbf{0})$ are crushed 
in co-dimension~$2$, over $I$.

\subsection{Example: $\mathrm{U}_1$ with $L\oplus L^\vee$.}\label{lplusv}
The space $\cC_3^\circ(\mathrm{U}_1; L\oplus L^\vee)$ is the quadric cone $xy=\mu^2-\tau^2$. 
In the original coordinates $\{\tau, z^\pm, \mu\}$, the rational automorphism 
$z\mapsto z(\mu+\tau)/(\mu-\tau)$ preserves precisely the subring generated by  
$\mu, \tau, x = (\mu-\tau)z, y = (\mu+\tau)z^{-1}$. The two copies of $\cC^\circ_3$ 
map to the constructible subsets 
\[
\begin{split}
\{\mu^2\neq\tau^2\}\sqcup \{\mu=\tau, y\neq0\}\sqcup\{\mu=-\tau, x\neq0\}\sqcup\{0\} 
\\
\{\mu^2\neq\tau^2\}\sqcup\{\mu=\tau, x\neq0\}\sqcup \{\mu=-\tau, y\neq0\}\sqcup\{0\}
\end{split}
\] 
whose union misses the nodal lines $x=y=0$ in the fibres over $\mu=\tau$ and $\mu=-\tau$, 
with the exception of their intersection at the vertex $0$, onto which the zero-fibre of 
each $\cC^\circ_3$ gets crushed. 

\subsection{Example: $\mathrm{SU}_2$ with the standard representation.} \label{su2std}
Consider the Weyl double cover of $\widetilde{\cC}^\circ_3$ of $\cC^\circ_3$, the $\mathrm{Spec}$ of $\bC[\mu, z^{\pm},\tau,\frac{z-1}{\tau}]$, in the $z,\tau$-notation already used for 
the maximal torus of $\mathrm{SU}_2$. The 
functions over $\widetilde{\cC}^\circ_3$ are generated over 
$\bC[\mu,\tau]$ by  $u=\frac{z-1}{\tau}$ and $v=\frac{1-1/z}{\tau}$, with the single 
relation $u - v = \tau uv$. The Weyl action switches $u$ and $v$ and changes the sign of $\tau$. 
Translation by $\vep_V$ sends $z$ to $\frac{\mu+\tau}{\mu-\tau}\: z$. 
Let $x: = \mu u - z, y:= \mu v -z^{-1}, w = (x-y)/\tau$; the surviving subring is described 
by generators and relations over $\bC[\mu,\tau]$ as 
\[
\left\{x, y, w\right\}, 
	\text{ with relations }x-y = \tau w ,\quad xy = 1+ \mu w. 
\]
(We justify the generators in the next example.) Setting $\mu=0$  yields the 
ring $\bC[\tau, z^\pm, \frac{z-1/z}{\tau}]$. This is $\bC\times \bC^\times$, with 
the points $(0,\pm1)$ blown up and the proper transform of $\tau=0$ removed. Each 
of the two $\widetilde{\cC}^\circ_3$ charts covers one of the exceptional divisors 
and misses the other. 

\subsection{Example: $\mathrm{SU}_2$ with a general representation.} \label{su2}
Factor $\vep_V(\mu,\tau) = \phi(\mu,\tau)\phi^{-1}(\mu,-\tau)=\phi_+\phi_-^{-1}$, 
with a homogeneous polyonomial $\phi$ of degree $N$, and let $x=(z\phi_--\mu^N)/\tau, y =(\mu^N-z^{-1}\phi_+)/\tau$ and $w=(x-y)/\tau$ as before. 
Generators and relations for the surviving subring are
\begin{equation}\label{gens}
\left\{x,y, w\right\}, \text{ with }  x-y = \tau w ,\quad
xy = \frac{\mu^{2N} -\phi_+\phi_-}{\tau^2} + 	\mu^N w.
\end{equation}
Setting $\mu=0$ gives the subring generated by $\tau^{N-1} 
(z-(-1)^Nz^{-1}) $ and $\tau^{N-2}(z+(-1)^Nz^{-1})$. This reproduces 
the result of \cite[Example 6.9]{bfn}.

For instance, choosing the adjoint representation gives $N=2$ and the Weyl invariant ring 
is $\bC[\tau^2, z+z^{-1}, \tau(z-z^{-1})]$, defining the quotient $T^\vee \bC^\times/\{\pm1\}$. This is the Coulomb branch for the zero representation of $\mathrm{U}_1$, Weyl quotiented by ${\pm1}$. More generally, any representation with $N>1$ leads to the Weyl quotient of the $\mathrm{U}_1$ Coulomb branch for a 
representation with an $N$ that is lower by $2$ --- such as $V\ominus \frg/\frh$, if $V$ happened 
to contain the adjoint representation. We generalize this in the Appendix.

\subsection{Checking the $\mathrm{SU}_2$ example.} Let $A$ be the surviving subring, and 
$A'\subset A$ the subring generated by \eqref{gens}; let us check that $A'=A$. This is clear 
with $\tau$ inverted, by reduction to the case of $\mathrm{U}_1$, when $z\phi_-$ and $z^{-1}\phi_+$ 
generate $\widetilde{\cC}^\circ_3$ over $\bC[\mu, \tau^\pm]$. Upon formal completion near $\tau=0$, 
the statement is equally clear with $\mu$ inverted, when $\phi_\pm$ become units. This shows that 
$A/A'$ is a quasi-coherent torsion sheaf on the $(\mu,\tau)$-plane  supported at $\mu=\tau=0$. 
But such a sheaf would yield a $\mathrm{Tor}_2$ group against the sky-scraper at $\mu=\tau=0$, 
which is forbidden, because (I claim) both $A'$ and $A$ are flat over $\bC[\tau,\mu]$. 
Flatness $A'$ is checked easily from the $3$-step resolution built from \eqref{gens}; that of $A$ is discussed below.

\subsection{Normality.}
Our description of $\cC^\circ(G;E)$ implies its normality: indeed, if a function $f$ is integral 
over the surviving subring, then $f\circ(\vep_V\cdot)$ is integral over $\cC^\circ$, so it is 
regular, and so $f$ survives. Alternatively, granting flatness of the Toda projections, one sees the desired regularity in co-dimension~$1$ from the generic geometric 
behaviour described in \S\ref{codim2}. 

Flatness of the Coulomb branches over the massive Toda bases (freedom, in fact) is 
wrapped into the proof of Theorem~1, in the next section. Normality of the massless specialisation follows from flatness, as we can again check regularity in co-dimension $1$: 
the generic Abelian description applies away from the root hyperplanes, while on the generic part of 
a root hyperplane the $\mathrm{SU}_2$ description of  Example~\ref{su2} takes its place.

\section{Proof of Theorems~1 and 2}\label{proofs}
We use the Schubert stratification of $\Omega^a G$ into $G[\![z]\!]$-orbits. Even-dimensionality 
collapses the associated spectral sequences and leads to ascending filtrations on the rings 
$\bC[\cC^\circ(G;E)]$. 
The associated graded components are easily described (\S\ref{assgr} below), and are locally 
free over the Toda bases. This makes the original rings locally free as well; in particular, 
they are flat over $\bC[\mu], \bC[m^\pm]$.

I write out the proof for $\cC^\circ_3$; the $K$-theory case is entirely parallel. Call $A_V$ 
the ring, implied in Theorem~\ref{c3}, of regular functions on $\cC^\circ_3$ which survive 
$\vep_V$-translation. We will see from topology how this last operation is compatible with the 
Schubert filtration, so that we can also define the subring  $\Sigma_V\subset \mathrm{Gr}\,
\bC[\cC^\circ_3]$ of symbols which remain regular after $\vep_V$-translation. 
Clearly, $\mathrm{Gr}A_V\subset \Sigma_V$. The theorems will follow from two observations:
\begin{enumerate}\itemsep0ex
\item $\bC[\cC^\circ_3(G;E)]\subset A_V$;
\item $\mathrm{Gr}\,\bC[\cC^\circ_3(G;E)] = \Sigma_V$.
\end{enumerate} 

\subsection{The index bundle.}
Over the stack $\mathfrak{Bun}_G(\bP^1)$  of principal $G_\bC$-bundles over $\bP^1$, there lives 
the virtual index bundle $\mathrm{Ind}_V$, the holomorphic Euler characteristic of the sheaf of 
sections of $V$ over $\bP^1$ with simple vanishing condition at one marked point $\infty$. 
It is a class in $K_{G\times S^1}^0(\Omega G)$, after incorporating the mass parameter $\mu$ 
(equivariance under scaling of $V$). Call  $\mathrm{e}_V$ its equivariant Euler class, 
more accurately defined as the $\mu$-homogenised $G$-equivariant total Chern class of 
$\mathrm{Ind}_V$. The following two propositions are understood after suitable localisation 
on the massive Toda base  $\frh_\bC/W_\bC\times\bC$. 

\begin{proposition}\label{cap}
Translation by $\vep_V$ on $\cC_3$ corresponds to cap-product with $\mathrm{e}_V$ on 
$H_*^{G\times S^1}(\Omega G)$. 
\end{proposition}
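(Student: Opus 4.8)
The plan is to prove the two operations coincide, over a localisation of the massive Toda base $\frh_\bC/W\times\bC$ on which $\vep_V$ is a genuine section and $\mathrm{e}_V$ an invertible class, in two steps: first that cap-product with $\mathrm{e}_V$ is an algebra automorphism of the Pontryagin ring $\bC[\cC_3]=H_*^{G\times S^1}(\Omega G)$, i.e.\ the pullback along an automorphism of $\cC_3$ over the base; then that this automorphism is precisely $\vep_V$-translation, by reduction to the maximal torus.

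For the first step I would isolate the structural input: the multiplicativity of the index class under loop multiplication. Write $m\colon\Omega G\times\Omega G\to\Omega G$ for the loop product, which induces the Pontryagin product (hence the ring $\bC[\cC_3]$). The claim is $m^*\mathrm{e}_V=\mathrm{e}_V\times\mathrm{e}_V$. This descends from additivity of the virtual index bundle, $m^*[\mathrm{Ind}_V]=[\mathrm{Ind}_V]\times 1+1\times[\mathrm{Ind}_V]$ in $K^0_{G\times S^1}(\Omega G\times\Omega G)$: in the lattice model of $\Omega G$ this is additivity of relative dimensions, $[V[\![z]\!]:\gamma\delta V[\![z]\!]]=[V[\![z]\!]:\gamma V[\![z]\!]]+[\gamma V[\![z]\!]:\gamma\delta V[\![z]\!]]$ (equivalently the normalisation/gluing exact sequence for the universal bundle on $\bP^1\times\mathfrak{Bun}_G$), where conjugation-equivariance identifies the shifted second term $\gamma_*[\mathrm{Ind}_V(\delta)]$ with $[\mathrm{Ind}_V(\delta)]$. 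The Whitney formula then turns the additive $\mu$-homogenised Chern character into a multiplicative total Chern class, i.e.\ a group-like $\mathrm{e}_V$. Granting this, the projection formula for $m$ gives, on Pontryagin products $a\cdot b=m_*(a\times b)$,
\[
\mathrm{e}_V\cap(a\cdot b)=m_*\!\big((m^*\mathrm{e}_V)\cap(a\times b)\big)=(\mathrm{e}_V\cap a)\cdot(\mathrm{e}_V\cap b),
\]
so $\mathrm{e}_V\cap-$ is an algebra endomorphism of $\bC[\cC_3]$, invertible (with inverse $\mathrm{e}_V^{-1}\cap-$) after localisation.

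For the second step I would abelianise over the regular locus. At a regular $\xi\in\frh_\bC/W$ the localisation theorem for the conjugation action reduces $\Omega G$ to $\Omega H$, so $\cC_3(G;\mathbf{0})$ localises to $(T^\vee H^\vee_\bC)_{reg}/W$ with Toda fibre the dual torus $H^\vee_\bC$. On the torus $\Omega H=\pi_1H=X_*(H)=X^*(H^\vee)$ is discrete, its components matched to the characters $e^\lambda$ spanning the fibre coordinate ring, and the computation is the abelian one of Example~II: restricting $\mathrm{Ind}_V$ to the component $\lambda$ and splitting $V$ into weights $\nu$ reduces each factor to Example~I, giving $\mathrm{e}_V=\prod_\nu(\mu+\langle\nu|\xi\rangle)^{\langle\nu,\lambda\rangle}$ there, so $\mathrm{e}_V\cap-$ multiplies $e^\lambda$ by that scalar. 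Both $\mathrm{e}_V\cap-$ and $\vep_V$-translation restrict to this torus model --- the index bundle and the weight-defined section \eqref{eulerdef} are intrinsically abelian --- and there $\vep_V$-translation sends $e^\lambda\mapsto e^\lambda(\vep_V)\,e^\lambda=\prod_\nu(\mu+\langle\nu|\xi\rangle)^{\langle\nu,\lambda\rangle}\,e^\lambda$, using the double r\^ole of $\nu$ as cocharacter of $H^\vee$ paired with $\lambda$. The two scalars agree on every $e^\lambda$, so the automorphisms agree over the regular locus and hence on all of $\cC_3$ by irreducibility.

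The main obstacle is the multiplicativity input $m^*\mathrm{e}_V=\mathrm{e}_V\times\mathrm{e}_V$ of the first step, and the care it requires is threefold: to use loop multiplication rather than the Hecke convolution of Remark~\ref{hecke} as the operation inducing the ring structure; to read additivity of the index as additivity of relative lattice dimensions, where conjugation-equivariance is exactly what kills the potential cross term; and to retain the mass parameter $\mu$ so that it is the total Chern class --- not the possibly-vanishing Euler class --- that becomes group-like, the very point of the fake-cancellation discussion of Example~II. The abelian step, by contrast, is essentially Example~II transported by localisation, and it alone already forces the operator identity over the regular locus; the first step is what exhibits the result intrinsically as a group-scheme translation.
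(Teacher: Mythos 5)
Your proposal is correct and follows essentially the same route as the paper: your first step is the content of Remark~\ref{group} (additivity of $\mathrm{Ind}_V$ under the loop/sphere product makes $\mathrm{e}_V$ group-like, hence cap-product is translation by a rational section), and your second step is the paper's one-line proof (localise off the root hyperplanes, apply the fixed-point theorem to reduce to the torus, and identify the section via Proposition~\ref{euler} as enhanced in Example~\ref{keyx2}). You merely spell out both halves in more detail than the paper does.
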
   
\begin{remark}\label{group}
Cap-product with $\mathrm{e}_V$ \emph{must a priori} correspond to translation by some rational 
section: the index bundle is additive for the sphere multiplication in $G\ltimes\Omega G$, 
so its Euler class is multiplicative. As a group-like element in the dual Hopf algebra, 
it represents a (rational) section of the group scheme $\cC^\circ$ over its Toda base. 
We identify this section by abelianization.
\end{remark}
\begin{proof}
Localise to the complement of the root hyperplanes on the Toda base to reduce, by the 
fixed-point theorem, to the case of a torus, where Proposition~\ref{euler} 
applies (as enhanced in Example~\ref{keyx2}). 
\end{proof}

\begin{corollary}
The Schubert filtration is preserved by $\vep_V$-translation. \qed
\end{corollary}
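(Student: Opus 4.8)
The plan is to read the corollary off Proposition~\ref{cap}, once the Schubert filtration is recognized as a filtration by \emph{support}. I would first fix notation: the $G[\![z]\!]$-orbits on $\Omega^a G$ are the Schubert cells, and their closures $\overline{\Omega}_{\le\lambda}$ are finite unions of cells of bounded dimension. Even-dimensionality collapses the Borel-Moore spectral sequence, so the filtration step $F_{\le\lambda}\,H_*^{G\times S^1}(\Omega G)$ is exactly the image of the closed-embedding pushforward from $H_*^{G\times S^1}(\overline{\Omega}_{\le\lambda})$---the classes supported on $\overline{\Omega}_{\le\lambda}$. After the localization on the Toda base under which Proposition~\ref{cap} is phrased, this support description is unchanged.

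Next I would invoke Proposition~\ref{cap} to replace $\vep_V$-translation by the operator $\mathrm{e}_V\cap(-)$. Since $\mathrm{e}_V$ is the ($\mu$-homogenized) total Chern class of the \emph{global} $K$-class $\mathrm{Ind}_V\in K^0_{G\times S^1}(\Omega G)$, it is a global cohomology class, so its restriction $i_\lambda^*\mathrm{e}_V$ to each $\overline{\Omega}_{\le\lambda}$ is defined. The projection formula
\[
\mathrm{e}_V\cap i_{\lambda *}\beta = i_{\lambda *}\!\left(i_\lambda^*\mathrm{e}_V\cap\beta\right),\qquad i_\lambda:\overline{\Omega}_{\le\lambda}\hookrightarrow\Omega G,
\]
then shows that a class supported on $\overline{\Omega}_{\le\lambda}$ is sent by $\mathrm{e}_V\cap(-)$ to a class still supported there. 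Hence $\mathrm{e}_V\cap(-)$ carries $F_{\le\lambda}$ into $F_{\le\lambda}$, which is the claimed compatibility; applied uniformly in $\lambda$ it therefore induces an operation on the associated graded $\mathrm{Gr}\,\bC[\cC^\circ_3]$, legitimizing the symbol ring $\Sigma_V$ introduced just above.

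The one point to watch is that $\mathrm{e}_V$ lives only in \emph{localized} equivariant cohomology (a rational function in $\mu$ and the Toda coordinates), so $\mathrm{e}_V\cap(-)$ a priori lands in the localization of $H_*^{G\times S^1}(\Omega G)$. But both the support description of the filtration and the projection formula are insensitive to inverting elements of $H^*_{G\times S^1}(\mathrm{pt})$, so the argument is unaffected; the genuine content of ``survival''---landing back in the unlocalized ring---is orthogonal to, and does not interfere with, the filtered property established here. I expect this localization bookkeeping to be the only mild obstacle; the geometric heart, compatibility of cap-product with a support filtration, is formal.
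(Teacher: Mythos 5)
Your proposal is correct and is essentially the paper's own (implicit) argument: the corollary is stated with a \qed precisely because Proposition~\ref{cap} reduces $\vep_V$-translation to cap-product with the global class $\mathrm{e}_V$, which preserves the support filtration by the projection formula. You have merely spelled out the standard details (support description of the Schubert filtration, projection formula, and the harmless localization), all of which match the paper's intent.
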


\subsection{Two embeddings of $\bC[\cC^\circ_3(G;E)]$.}\label{twoemb} 
Refer to the notation in \S\ref{LV} and Remark~\ref{hecke}. The Hecke construction 
at $0\in \bP^1$ maps the stack $\mathfrak{Bun}_G(\ddisk) = G[\![z]\!]\backslash\Omega^aG$ 
of $G_\bC$-bundles over the double-centered disk to  $\mathfrak{Bun}_G(\bP^1)$. This gives 
an equivariant homotopy equivalence and in particular a ($K$-)homology equivalence. 
The key observation is that, 
restricted to $G[\![z]\!]\backslash\Omega^aG$, 
$\mathrm{Ind}_V$ is the ``left minus  right" copy of $V[\![z]\!]$. 

More precisely, note the two inclusions $\iota_{l,r}:L_V\hookrightarrow V[\![z]\!]$, and recall 
that over any finite union of strata, $L_V$ contains a finite co-dimension sub-bundle. Quotienting 
it out  regularises the difference of $V[\![z]\!]$-bundles into a class in $K_{G\times S^1}(\Omega G)$. 
A moment's thought identifies this with $\mathrm{Ind}_V$, as the index of the Hecke transform of 
the trivial $V$-bundle on $\bP^1$, minus that of the trivial $V$-bundle.

Each inclusion $\iota_{l,r}$ defines a graded ring homomorphism $\varphi_{l,r}:\bC[\cC^\circ_3(G;E)]\rightarrowtail\bC[\cC^\circ_3]$, by intersecting with the zero-section 
in the ambient bundle. Per 
our discussion, $\varphi_l = \mathrm{e}_V\cap \varphi_r$. Using $\varphi_r$ to pin 
down $\cC^\circ_3(G;E)$, Proposition~\ref{cap} now settles Observation (i).

\subsection{Working out $\Sigma_V$.} \label{indexsplit}
For a $1$-parameter subgroup $z^\eta\in \Omega H$, with Schubert 
stratum $C_\eta$ and  Levi centraliser $Z(\eta)\subset G$, split $V = V_+\oplus V_0\oplus V_-$ 
following the sign of the $\eta$-eigenvalue. The index bundle then splits as $\mathrm{Ind}_V 
=I_+(\eta)\ominus I_-(\eta)$,  with the $\nu$-weight space of $V_\pm$ appearing 
$\pm\langle\nu|\eta\rangle$ times in $I_\pm(\eta)$. The Euler class $\mathrm{e}_V$ factors 
at $z^\eta$ as 
\[
\left.\mathrm{e}_V\right|_{z^\eta} = 
\mathrm{e}_+(\eta) \cdot\mathrm{e}_-(\eta)^{-1}, \text{ with }\mathrm{e}_\pm(\eta) = 
\prod_\nu(\mu+\nu)^{|\langle\nu|\eta\rangle|}.
\] 
There is a (degree-shifting) isomorphism
\[
\mathrm{Gr}_\eta\bC[\cC^\circ(G;\mathbf{0})] = BMH_*^{G\times S^1}(C_\eta) \cong 
       H_*^{Z(\eta)\times S^1}(\mathrm{point}),
\]
and the $\eta$-graded component of $\Sigma_V$ is the subspace $\mathrm{e}_-\cap 
\mathrm{Gr}_\eta\bC[\cC^\circ(G;\mathbf{0})]$.

\subsection{Working out $\mathrm{Gr}\,\bC[\cC^\circ_3(G;E)]$.} \label{assgr}
Collapse of the Schubert spectral sequence implies that
\[
\mathrm{Gr}_\eta\bC[\cC^\circ_3(G;E)] = BMH_*^{G\times S^1}(\left.L_V\right|_\eta).
\]
Now, the the homology group is generated over $H_*^{Z(\eta)\times S^1}(\mathrm{point})$ by the 
fundametal class of the total space of $L_V$ over $C_\eta$, whose complement in the right 
$V[\![z]\!]$ of \eqref{kay} is precisely $I_-(\eta)$; therefore 
\[
\mathrm{Gr}_\eta\bC[\cC^\circ_3(G;E)] = \mathrm{e}_-(\eta)\cap \mathrm{Gr}_\eta\bC[\cC^\circ(G;\mathbf{0})],
\]
in agreement with the $\eta$-component of $\Sigma_V$ above. This settles Observation~(ii).

\section{Non-commutative Coulomb branches} 
\label{noncomgen}

Incorporating the loop rotation circle $R$ in the previous constructions leads to 
non-commutative deformations  
$\cN\cC^\circ_{3,4}(G;E)$ of the Coulomb branches over the ground rings $\bC[h]=H^*(BR)$ and 
$\bC[q^\pm]=K_R(\mathrm{point})$, respectively. The geometric objects exist in the formal 
neighbourhoods of $h=0$ and $q=1$; away, only their function rings $\cA_{3,4}$ survive. 
Nonetheless, we sometimes keep the convenient conversational pretence of underlying 
spaces $\cN\cC$. The calculation in \S\ref{proofs} for their description applies with 
only minor changes: we are only missing the good statements, which we summarise below 
before spelling out the argument. 

This section is rather sketchy; a development spelling out the r\^ole of our 
non-commutative solutions, the $\Gamma$-functions, in connection with the 
GLSM, is planned for a follow-up paper. 

\subsection{Summary.} The integrable abelian group structure of the $\cC^\circ$ over their Toda 
bases deforms to a symmetric tensor structure\footnote{I thank David Ben-Zvi for pointing 
out to me the generality of this statement.} on $\cA$-modules, induced from the diagonal 
inclusion $\Omega G\rightarrowtail \Omega G\times\Omega G$. Restricting the module structure
to the Toda base, this is the ordinary tensor product, with tensor unit the structure sheaf 
$\cO_1$ of the identity section. For $\cC_3^\circ$  in the Whittaker presentation 
(Remark~\ref{whittaker}), the operation comes 
from  convolution of $\mathscr{D}$-modules on the Langlands dual group $G^\vee$: from this 
stance, the symmetric monoidal structure is developed in \cite{bzg}.

The Lagrangians $\vep_V,\lambda_V$ deform to modules $E_V, \Lambda_V$ over $\cA_{3,4}$, and 
the (rational) automorphisms of $\cC$ defined by $\vep_V,\lambda_V$-translation become, 
on $\cA$-modules, the functors of convolution with $E_V, \Lambda_V$. The Hamiltonian nature 
of the translations renders these functors (generically) trivialisable by (singular) inner 
automorphisms of $\cA$.  
In Theorem~\ref{cnc}, I characterise the Coulomb branches $\cN\cC^\circ(G;E)$ as the subrings of 
elements of $\cA$ which survive these inner automorphisms (that is, remain regular).

While this loose description of the $\cN\cC^\circ(G;E)$ appears uniform, a 
distinction arises between formal and genuine deformations. Formally, the modules 
$E_V$ and $\Lambda_V$ are generically invertible, analogous to flat line bundles with 
singularities, with the latter located on the singular loci of the sections 
$\vep_V,\lambda_V$. If, following 
the language of $\mathscr{D}$-modules, we call \emph{solutions} the $\cA$-module 
morphisms to the identity section $\cO_1$, then the super-potentials that were introduced in 
Remark~\ref{superpotentials} are the leading $h\to 0$ asymptotics of the 
logarithms of the solutions (cf.~Remark~\ref{stirling}).

With the deformation parameters turned on, these asymptotics become meromorphic 
solutions that are easily found. For $E_V$ on $\cC_3$, a solution is the 
$\Gamma$-function of the 
representation $V$ (recalled in \S\ref{gamma} below), while a $q$-analogue solves 
$\Lambda_V$ on $\cC_4$. Conjugation by these solutions impose the defining regularity 
constraints for the $\cN\cC^\circ(G;E)$. Outside the range the formal limit, the modules
$E_V,\Lambda_V$ can be defined by these solutions, which thus become the primary objects. 
We may prefer, for convenience, the (tensor) inverse modules and their holomorphic 
solutions; thus, $E_V^{-1}$ is the quotient  of $\cA_3$ by the annihilator 
$\cI_V$ of the (holomorphic) solution $\Gamma_V^{-1}$: 
\[
\begin{array}{ccccc}
  E_V^{-1}:=&\cA_3/\cI_V &\xrightarrow{\ \cdot\: \Gamma_V^{-1}\ }& \cO_1 = \cA_3/\cI_1\\  
  & \cup & & \\
  &  \cO_1 & &
\end{array}\]
If we regard the quotient $\cA_3/\cI_V$ as an analytic sheaf over the Toda base, the solution 
map $\Gamma_V^{-1}$ is an isomorphism. (Otherwise, its infinitely many zeroes prevent 
it from surjecting onto $\cO_1$.) We can then characterise $\cN\cC^\circ(G;E)$ in three 
equivalent ways, the last two of which are $\Gamma$-conjugate:  
\begin{enumerate}\itemsep0ex
\item as the subring of elements of $\cA_3$ which survive conjugation by $\Gamma_V^{-1}$, 
\item as the subring of $\cA_3$ whose multiplicative action preserves the inclusion $\cO_1\subset E_V^{-1}$, 
\item as the subring of $\cA_3$ whose multiplicative action preserves the inclusion 
$\Gamma^{-1}_V\cO_1 \subset \cO_1$. 
\end{enumerate}
There is a parallel story for $\cN\cC_4$. 
Before spelling out the details, let us revisit the case of $\mathrm{U}_1$.

\subsection{Example I: $\mathrm{U}_1$ with its standard representation.}\label{noncomu1}
The symplectic space $T^\vee\bC^\times = \mathrm{Spec}\,H_*^{\mathrm{U}_1}(\Omega\mathrm{U}_1)$ 
has a natural non-commutative deformation, realised topologically by the Pontryagin ring  
$H_*^{\mathrm{U}_1\times R}(\Omega\mathrm{U}_1)$. Indeed, on $\pi_1\mathrm{U}_1$, $z$-multiplication 
is the shift $n\mapsto n+1$, at which point the $R$-rotation collects an extra $\mathrm{U}_1$-weight. 
We compute from here the Pontryagin ring as $\bC[h]\langle z^\pm,\tau\rangle$ with relation $z\tau=(\tau+h)z$. 
We now identify the non-commutative Coulomb branch $H_*^{\mathrm{U}_1\times R}(L_L)$ for 
the standard representation $L$:

\begin{lemma}\label{CXY}
$\cN\cC_3(\mathrm{U}_1;L\oplus L^\vee)$ is the subring of  
$H_*^{\mathrm{U}_1\times R}(\Omega\mathrm{U}_1)$ generated over $\bC[h]$ by $z, z^{-1}\tau$.
\end{lemma}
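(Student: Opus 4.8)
The plan is to specialise the two--embedding argument of \S\ref{twoemb}--\S\ref{assgr} to $G=\mathrm{U}_1$ and $V=L$, where the stratified space $L_L\to\Omega^a\mathrm{U}_1$ is explicit enough that one can compute the right embedding $\varphi_r$ by hand and simply read off its image. First I would describe $L_L$ over the components $\Omega^a\mathrm{U}_1\cong\bZ$: on the component indexed by the cocharacter $z^n$, the fibre in \eqref{kay} is $\{(v_1,v_2):v_1=z^nv_2\}\cong L[\![z]\!]$, and the two projections $\iota_l,\iota_r$ onto the outer copies of $L[\![z]\!]$ have images of codimension $\max(0,n)$ and $\max(0,-n)$ respectively. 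Even--dimensionality collapses the relevant spectral sequence (\S\ref{assgr}), so the renormalised $\mathrm{U}_1\times R$--equivariant Borel--Moore homology $\cA:=H_*^{\mathrm{U}_1\times R}(L_L)$ is, in each component, free of rank one over $\bC[\tau,h]=H_*^{\mathrm{U}_1\times R}(\mathrm{point})$, generated by the fundamental class $c_n$; and the Pontryagin product of \eqref{correspondence} respects the resulting $\bZ$--grading.

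The heart of the argument is the computation of $\varphi_r(c_n)$. For $n\ge 0$ the projection $\iota_r$ is an isomorphism and $\varphi_r(c_n)=z^n$. For $n=-k<0$, intersecting with the zero--section picks up the equivariant Euler class of the cokernel $L[\![z]\!]/z^kL[\![z]\!]$; because the loop rotation $R$ acts on the stratum $z^jL$ with weight $j$, this Euler class is the Pochhammer product $\prod_{j=0}^{k-1}(\tau+jh)$ rather than the naive $\tau^k$. Using the defining relation $z\tau=(\tau+h)z$ one recognises $z^{-k}\prod_{j=0}^{k-1}(\tau+jh)=(z^{-1}\tau)^k$, so $\varphi_r(c_{-k})=(z^{-1}\tau)^k$. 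Hence $\varphi_r$ carries $\cA$ isomorphically onto the $\bC[\tau,h]$--span of $\{z^n\}_{n\ge 0}\cup\{(z^{-1}\tau)^k\}_{k>0}$. Writing $x=z$ and $y=z^{-1}\tau$, one has $xy=\tau$ and $[x,y]=h$, so this span is exactly the $\bC[h]$--subalgebra generated by $z$ and $z^{-1}\tau$, as claimed.

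To make the identification airtight I would run it through the two observations of \S\ref{proofs}. The relation $\varphi_l=\mathrm{e}_L\cap\varphi_r$ of Proposition~\ref{cap} --- with $\mathrm{e}_L$ the $R$--refined index Euler class, a genuine polynomial on the nonnegative components and its inverse on the negative ones --- confirms that both embeddings land in the localised Pontryagin ring $\bC[\tau,z^{\pm},h]$, so the image really is a subring there (Observation (i)). Injectivity of $\varphi_r$ together with the rank--one computation then gives $\mathrm{Gr}\,\cA=\Sigma_L$ for the $\bZ$--filtration (Observation (ii)): the surviving symbols are precisely $z^n$ in nonnegative degrees and $(z^{-1}\tau)^k$ in negative ones, with nothing larger possible. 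Equality $\cA=\langle z,z^{-1}\tau\rangle_{\bC[h]}$ follows.

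The main obstacle --- indeed the only genuinely new feature compared with the commutative Proposition~\ref{euler} --- is the $R$--equivariant refinement of the Euler class: the weight--$j$ loop rotation on $z^jL$ converts $\tau^k$ into the Pochhammer product $\prod_{j=0}^{k-1}(\tau+jh)$. This is precisely the $h$--shift encoded in $z\tau=(\tau+h)z$, and it is what selects the ordering $z^{-1}\tau$ (rather than $\tau z^{-1}$) among the generators; getting these rotation weights and their noncommutative ordering right is the delicate point. The supporting facts --- collapse of the spectral sequence, freeness over $\bC[\tau,h]$, and injectivity of $\varphi_r$ --- are routine for $\mathrm{U}_1$, where every Schubert stratum is a point.
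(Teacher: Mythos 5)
Your proposal is correct and follows essentially the same route as the paper: embed via the right inclusion $\varphi_r$, observe that the winding-$n$ component maps to $z^n\cdot\bC[\tau,h]$ for $n\ge 0$ and to $z^{-k}$ times the $R$-equivariant Euler class $\tau(\tau+h)\cdots(\tau+(k-1)h)$ of the missing summand $I_-$ for $n=-k<0$, and identify the latter with $(z^{-1}\tau)^k$ using $z\tau=(\tau+h)z$. The extra scaffolding you supply (explicit fibres of $L_L$, codimensions of $\iota_{l,r}$, spectral-sequence collapse) is consistent with \S\ref{twoemb}--\S\ref{assgr} and merely makes explicit what the paper leaves implicit.
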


\begin{remark} Setting $X=z, Y=z^{-1}\tau$, this ring is $\bC[h]\langle X,Y\rangle/ ([X,Y]-h)$, 
as one could  have guessed from the Poisson relation $\{x,y\}=h$ in $\bC[x,y]$ 
(notation as in Example~\ref{keyx}). 
\end{remark}
\begin{proof}
Using the right inclusion in \S\ref{twoemb} to embed the ring, we find at the winding mode 
$n\ge 0$ the summand $z^n\cdot\bC[h,\tau]$; whereas at a negative 
winding mode $(-n)$, we find
\[
z^{-n}\mathrm{e}_- = z^{-n}\tau(\tau+h)\cdot\dots\cdot(\tau+(n-1)h) = (z^{-1}\tau)^n,
\]  
from the Euler class $\mathrm{e}_-(-n)$ of $I_-$, 
which is the summand missing from the right copy of $V[\![z]\!]$.
\end{proof}

Recall now the $h$-periodic Gamma-function 
\[
\Gamma(w;h):= h^{w/h-1}\Gamma(w/h). 
\]
It satisfies 
\[
\Gamma(w+h;h)=w\Gamma(w;h) \text{ and }\Gamma(h;h)=1. 
\]
From $z\tau z^{-1} =\tau+h$ we get
\begin{equation}\label{conj}
\Gamma(\tau;h)\cdot z\cdot \Gamma(\tau;h)^{-1} = \tau^{-1} z, 
\end{equation}
which exhibits $\Gamma(\tau;h)$ as a solution to the module $\cA_3/(z-\tau)$, 
the obvious quantisation of $\vep_V$: 

\begin{corollary}\label{solution}
Away from the poles, sending $1$ to $\Gamma(\tau;h)$ maps $\cA_3/(z-\tau)$ into 
$\cO_1 =\cA_3/(z-1)$. \qed
\end{corollary}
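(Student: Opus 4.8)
The plan is to read the statement as the solvability of a finite-difference equation, the relevant equation being exactly the functional equation defining the $h$-periodic Gamma function. The structural observation is that a homomorphism of cyclic left $\cA_3$-modules out of $\cA_3/(z-\tau)$ is the same datum as an element $s$ of the target that is annihilated on the left by $z-\tau$; with target $\cO_1 = \cA_3/(z-1)$, the corollary reduces to producing $s\in\cO_1$ with $(z-\tau)s=0$ and checking that $s=\Gamma(\tau;h)$ does the job.

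First I would make the action on $\cO_1$ explicit. Writing $\cO_1 = \bC[h][\tau]\cdot\bar 1$ with $\bar 1$ the image of $1$, the commutation relation $z\tau=(\tau+h)z$ forces $z\cdot f(\tau)\bar 1 = f(\tau+h)\bar 1$, so on $\cO_1$ the generator $z$ acts as the shift $\tau\mapsto\tau+h$ while $\tau$ acts by multiplication. Under this dictionary the annihilation condition $(z-\tau)s=0$ for $s=s(\tau)\bar 1$ becomes the difference equation $s(\tau+h)=\tau\, s(\tau)$.

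Next I would solve it. This is precisely the relation $\Gamma(w+h;h)=w\,\Gamma(w;h)$ recorded for the $h$-periodic Gamma function, so $s(\tau)=\Gamma(\tau;h)$ is a solution (normalised by $\Gamma(h;h)=1$). Equivalently one may read the claim directly off the conjugation identity \eqref{conj}: from $z\,\Gamma(\tau;h)=\Gamma(\tau+h;h)\,z$ and $z\bar 1=\bar 1$ one gets $z\cdot\Gamma(\tau;h)\bar 1=\Gamma(\tau+h;h)\bar 1=\tau\,\Gamma(\tau;h)\bar 1$, i.e.\ $(z-\tau)\,\Gamma(\tau;h)\bar 1=0$. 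Hence $\bar 1\mapsto\Gamma(\tau;h)$ extends to a well-defined left $\cA_3$-module map $\cA_3/(z-\tau)\to\cO_1$, which is the assertion.

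The one genuine subtlety, and the reason for the hypothesis, is that $\Gamma(\tau;h)$ is not algebraic: it is meromorphic in $\tau$, with poles exactly at $\tau/h\in\{0,-1,-2,\dots\}$ and no zeroes. Thus the target should be read as the analytic (meromorphic) incarnation of $\cO_1$ over the Toda base rather than the polynomial ring, and $\Gamma(\tau;h)$ is a bona fide regular section of it only on the complement of those poles --- which is where the map is defined. I expect this analytic bookkeeping to be the only point needing care; the algebra is the one-line verification above, and it fits the general picture in which the solution map becomes an isomorphism of analytic sheaves away from the poles.
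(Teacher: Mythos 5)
Your verification is correct and matches the paper's intent: the corollary is stated with a \qed precisely because it follows immediately from the functional equation $\Gamma(w+h;h)=w\,\Gamma(w;h)$ together with the commutation relation $z\,f(\tau)=f(\tau+h)\,z$ (equivalently, from \eqref{conj}), which is exactly the computation you carry out. Your remarks on the meromorphic nature of $\Gamma$ and the role of the ``away from the poles'' hypothesis also agree with the paper's own discussion following the corollary.
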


\noindent Holomorphy of the reciprocal function $\Gamma^{-1}$ 
is a reason to prefer the inverse module $\cA_3/(1-\tau z)$. 

\begin{remark}\label{stirling}
As $h\to 0$, Stirling's approximation gives (when $|\arg(\tau/h)|< \pi^-$) 
\[
\log\Gamma(\tau;h) = \frac{\tau}{h}(\log\tau -1) -\frac{1}{2}\log h + \frac{1}{2}\log(2\pi/\tau) 
+ O(h/\tau), 
\]
and we find in the leading $h^{-1}$ coefficient the Legendre transform $\psi^*(\tau)$ of $\psi(z)=z$. 
The Legendre correspondence quantises to the Laplace transform: viewing $\cA_3$ as the ring of 
$\mathscr{D}_h$-modules on $\bC^\times$, with $\tau=h\cdot z\frac{\partial}{\partial z}$, we  
find that the function $\exp(-z/h)$ on $\bC^\times$ is the solution to the module $\mathscr{D}_h/(\tau+z)$, 
Laplace transformed from the one in Corollary~\ref{solution}. 
\end{remark}
\begin{proposition}\label{noncom}
$\cN\cC_3(\mathrm{U}_1;L\oplus L^\vee)$ is the subring of elements of 
$H_*^{\mathrm{U}_1\times R}(\Omega\mathrm{U}_1)$ which survive 
conjugation by $\Gamma(\tau;h)^{-1}$. \qed
\end{proposition}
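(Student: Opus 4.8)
The plan is to reduce everything to Lemma~\ref{CXY} together with the conjugation identity \eqref{conj}, and then match the two descriptions of the ring by a winding-number bookkeeping exactly parallel to the computation in \S\ref{proofs}. Write $\cA_3 = H_*^{\mathrm{U}_1\times R}(\Omega\mathrm{U}_1) = \bC[h]\langle z^\pm,\tau\rangle$ with $z\tau=(\tau+h)z$, and let $c:=\mathrm{Ad}(\Gamma(\tau;h)^{-1})$ denote conjugation by $\Gamma^{-1}$. Inverting \eqref{conj} (equivalently, using $z\,\Gamma(\tau;h)=\tau\,\Gamma(\tau;h)\,z$, which is the functional equation read through $z\tau z^{-1}=\tau+h$) gives that $c$ acts by $\tau\mapsto\tau$ and $z\mapsto\tau z$; this is the quantisation of the classical $\vep_L$-translation of Proposition~\ref{ring}. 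Throughout, ``survive conjugation'' means that the a priori meromorphic element $\Gamma^{-1}f\Gamma$ again lies in $\cA_3$. Since Lemma~\ref{CXY} identifies $\cN\cC_3(\mathrm{U}_1;L\oplus L^\vee)$ with the subring $B:=\bC[h]\langle z,\,z^{-1}\tau\rangle$, it suffices to prove that $B$ equals the subring $S\subseteq\cA_3$ of elements surviving $c$.

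For the inclusion $B\subseteq S$ I would simply apply $c$ to the generators: $c(z)=\tau z\in\cA_3$ and, using $c(z^{-1})=c(z)^{-1}=(\tau z)^{-1}=z^{-1}\tau^{-1}$, also $c(z^{-1}\tau)=z^{-1}\tau^{-1}\tau=z^{-1}\in\cA_3$. As $c$ is a ring automorphism fixing $h$, it carries the whole subring generated by these elements into $\cA_3$, so $B\subseteq S$.

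The reverse inclusion $S\subseteq B$ is where the work sits. First I would note that $c$ preserves the $\bZ$-grading of $\cA_3$ by winding number ($z$-degree), so membership of $c(f)$ in $\cA_3$ can be tested mode by mode. Writing a general element in normal form $f=\sum_n z^n p_n(\tau)$ with $p_n\in\bC[h][\tau]$, the nonnegative modes are harmless: a short induction on $z\tau=(\tau+h)z$ gives $(\tau z)^n=z^n\prod_{j=1}^n(\tau-jh)$ for $n\ge 0$, which stays polynomial; and in any case $z$ and $\tau=z\cdot(z^{-1}\tau)$ already lie in $B$, forcing $\bigoplus_{n\ge0}z^n\bC[h][\tau]\subseteq B$. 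For a negative mode $n=-m<0$ the same recursion yields $(\tau z)^{-m}=z^{-m}\,r_m(\tau)^{-1}$ with $r_m(\tau):=\tau(\tau+h)\cdots(\tau+(m-1)h)$, so that $c(z^{-m}p_{-m})=z^{-m}r_m(\tau)^{-1}p_{-m}(\tau)$. Hence survival at mode $-m$ is precisely the divisibility $r_m\mid p_{-m}$. Finally I would observe that the \emph{same} recursion computes $Y^m=z^{-m}r_m(\tau)$ for $Y:=z^{-1}\tau$; thus whenever $p_{-m}=r_m\tilde p_{-m}$ one has $z^{-m}p_{-m}=Y^m\,\tilde p_{-m}(\tau)\in B$. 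Summing over modes gives $f\in B$, proving $S\subseteq B$ and hence the Proposition.

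The only genuinely delicate point is the first one: conjugation by the transcendental function $\Gamma(\tau;h)$ must be made sense of despite its poles, and shown to act as the algebraic translation $z\mapsto\tau z$. This is exactly the content of the functional equation packaged in \eqref{conj} and Corollary~\ref{solution}, so once one works in the analytic (difference-operator) completion in which $\Gamma$ is a genuine solution of the module $\cA_3/(z-\tau)$, the identification is forced and everything else is the routine mode computation above. Conceptually this is the $\mathrm{U}_1$ shadow of the general non-commutative statement: cap-product with the Euler/index class $\mathrm{e}_V$ of \S\ref{twoemb} is what deforms, upon turning on loop rotation, into conjugation by the $\Gamma$-factor, and the winding-mode divisibility by $r_m$ is the rank-one case of the $I_-(\eta)$-bookkeeping of \S\ref{indexsplit}.
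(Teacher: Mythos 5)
Your proof is correct and follows essentially the same route as the paper's: both directions hinge on \eqref{conj} (survival of the generators $z$ and $z^{-1}\tau$) and on the mode-by-mode identity $(z^{-1}\tau)^m = z^{-m}\,\tau(\tau+h)\cdots(\tau+(m-1)h)$ for the converse. The only cosmetic difference is that you phrase the negative-mode obstruction as the divisibility $r_m\mid p_{-m}$, where the paper instead exhibits a monomial basis and rules out the non-surviving span by a $\tau$-degree/linear-independence argument after right-multiplying by $(\tau z)^n$ --- the two are equivalent.
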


\begin{proof}
Survival of $z$ and $z^{-1}\tau$ is clear from \eqref{conj}. To show 
the converse inclusion, choose an $\cA_3$-element of negative $z$-degree $(-n)$. 
Reordering factors expresses it uniquely in monomials of the form 
\[
(z^{-1}\tau)^n\tau^m, \quad m\ge0, \quad \text{and}\quad (z^{-1}\tau)^az^{a-n},\quad 0\le a<n. 
\]
The former survive $\Gamma^{-1}$-conjugation. To rule out the latter, note that conjugation 
converts them to $z^{-a}(\tau z)^{a-n}$. These monomials are not regular 
in any $\bC[h]$-linear combination, or else a right multiplication 
by $(\tau z)^n$ would lead to a linear dependence among the monomials
\[\begin{split}
z^{-a}(\tau z)^a &= (\tau-h)\cdot\dots\cdot(\tau-ah), \qquad\qquad 0\le a <n \\  
z^{-n}\tau^m(\tau z)^n &= (\tau-nh)^m\cdot(\tau-h)\dots\cdot(\tau-nh), \quad m\ge 0
\end{split}
\]
which is pre-empted by their $\tau$-degree.
\end{proof}

\subsection{The $\Gamma_V$-class.}\label{gamma}
Generalising this involves promoting  $\Gamma$ to a 
multiplicative characteristic class of complex vector bundles. This requires some care: 
the Hirzebruch construction, the product $\Gamma(F;h) := \prod_\rho\Gamma(\rho;h)$ over the 
Chern roots $\rho$ of $F$, is ill-defined, as $\Gamma$ has a pole at $0$. The reciprocal 
$1/\Gamma$ is entire holomorphic, but its vanishing at $0$ would lead to an unstable class, 
undefined for virtual bundles. One remedy is to 
include the equivariant scaling (mass) parameter $\mu$, resulting in a $\mu$-meromorphic 
calculus for the classes $\prod_\rho\Gamma(\mu+\rho;h)$. Thus, a representation $V$ of $G$ 
leads to the entire holomorphic (in $\mu, \xi$) reciprocal function 
\[
\Gamma_V(\xi,\mu;h)^{-1}:\frh_\bC/W\times\bC \to \bC, \quad (\xi,\mu)\mapsto 
	\det\nolimits_V \Gamma(\xi\oplus\mu;h)^{-1}. 
\] 

\begin{remark}[Massless specialisation.] The correct massless specialisation is 
$\mu = \frac{1}{2}h$ (not $\mu=0$). In the construction of \cite{bfn},
this should be interpreted as inserting a square root of the canonical bundle on the doubled 
disk $\ddisk$. The same insertion within the index bundle does away with the vanishing 
condition at $\infty$ in the constructions of \S\ref{proofs}. The specialisation is illustrated by 
the identity $\Gamma\left(\frac{h}{2}+\tau;h\right)\Gamma\left(\frac{h}{2}-\tau;h\right) = \frac{\pi}{h}
\sec\left(\frac{\pi\tau}{h}\right)$: the product is therefore anti-central in $\cN\cC(\mathrm{U}_1)$ 
(it conjugates $z$ to $(-z)$), generalizing the identity 
$\mathrm{e}_L\cup\mathrm{e}_{L^\vee} = (-1)^n$ of Example~II in \S2.
\end{remark}

\begin{remark} \label{gammaeuler}
Interpreting $h$ as the equivariant parameter of the loop rotation group $R$, the Weierstra{\ss} 
product expansion portrays $\Gamma_V^{-1}$ as a regularised Euler class of the space of Taylor 
loops $V[\![z]\!]$. This interpretation also makes sense over 
certain stacks with a circle action, such as $G[\![z]\!]\backslash\Omega^aG$: 
a reasonable demand is that their $R$-equivariant homology is free over $\bC[h]$, so 
that extension of scalars to functions of $h$ holomorphic off the negative real axis 
(and allowing poles in $\mu,\xi$ as needed) is a faithful operation. For a torus, we can 
always pretend that $h$ is a numerical parameter, because the $R$-action on 
the stack $\mathfrak{Pic}(\bP^1)$ is trivializable. 
\end{remark}

\subsection{Example II: $\mathrm{U}_1$ with a representation $V$.} Split  
$V=V_+\oplus V_-$ according to $z$-exponents\footnote{A trivial representation 
summand $V_0$ does not affect the Coulomb branch.}, writing $\Gamma_V=\Gamma_+\Gamma_-$. 
We have
\begin{equation}\label{plusminusconj}
\Gamma_V^{-1}\cdot z\cdot \Gamma_V = (\Gamma_+\Gamma_-)^{-1}\cdot z\cdot\Gamma_+\Gamma_-
	= \Gamma_+^{-1}z\Gamma_+z^{-1} \cdot z \cdot z^{-1}\Gamma_-^{-1}z\Gamma_- = 
	\mathrm{e}_+(1)\cdot z\cdot \mathrm{e}_-(1)^{-1}
\end{equation}
with $\mathrm{e}_\pm(1)$ the $R$-equivariant extensions of the index Euler classes 
of \S\ref{indexsplit} at $\eta=1$. Repeating the computation in the proof of Lemma~\ref{CXY},  
\begin{equation}\label{eulerpowers}
[z\mathrm{e_-(1)}]^n = z^n\mathrm{e}_-(n), \quad [\mathrm{e}_+(1)z]^n = \mathrm{e}_+(n)z^n, 
\quad n\ge0. 
\end{equation}
\begin{proposition}
$\cN\cC_3^\circ(\mathrm{U}_1;V\oplus V^\vee)$ is generated over $\bC[\mu,\tau,h]$ 
by $z\mathrm{e}_-(1), z^{-1}\mathrm{e}_+(1)$, and is the subring of 
$H_*^{\mathrm{U}_1\times S^1\times R}(\Omega\mathrm{U}_1)$ surviving conjugation 
by $\Gamma_V$.
\end{proposition}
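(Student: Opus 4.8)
The plan is to mirror the argument already carried out for the standard representation in Lemma~\ref{CXY} and Proposition~\ref{noncom}, using the multiplicative splitting $V = V_+\oplus V_-$ to reduce everything to the positive- and negative-winding computations encoded in \eqref{plusminusconj} and \eqref{eulerpowers}. First I would establish the generation claim. Using the right inclusion $\iota_r$ of \S\ref{twoemb} to embed $\cN\cC_3^\circ(\mathrm{U}_1;V\oplus V^\vee)$ into $H_*^{\mathrm{U}_1\times S^1\times R}(\Omega\mathrm{U}_1)$, the same collapse of the Schubert spectral sequence that gave Lemma~\ref{CXY} shows that at winding mode $n\ge 0$ the ring contains the summand $[z\,\mathrm{e}_-(1)]^n\cdot\bC[\mu,\tau,h] = z^n\mathrm{e}_-(n)\cdot\bC[\mu,\tau,h]$, the factor $\mathrm{e}_-(n)$ being the Euler class of the part $I_-(n)$ of $V[\![z]\!]$ missing from the right copy; symmetrically, at winding mode $(-n)$ we get $[z^{-1}\mathrm{e}_+(1)]^n\cdot\bC[\mu,\tau,h] = \mathrm{e}_+(n)z^{-n}\cdot\bC[\mu,\tau,h]$. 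This identifies the ring with the $\bC[\mu,\tau,h]$-subalgebra generated by $z\mathrm{e}_-(1)$ and $z^{-1}\mathrm{e}_+(1)$, exactly as in the standard case but with $\tau$ and $1$ replaced by the index Euler classes.

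Next I would verify the conjugation characterisation. That the generators survive $\Gamma_V$-conjugation is immediate from \eqref{plusminusconj}, since $\Gamma_V^{-1}z\Gamma_V = \mathrm{e}_+(1)\,z\,\mathrm{e}_-(1)^{-1}$ implies that $\Gamma_V$-conjugation carries $z\mathrm{e}_-(1)$ and $z^{-1}\mathrm{e}_+(1)$ to regular elements (one checks directly that conjugation sends $z\mathrm{e}_-(1)\mapsto \mathrm{e}_+(1)z$ and its negative-winding counterpart to $z^{-1}\mathrm{e}_-(1)$, both manifestly in the ring). For the reverse inclusion I would repeat the monomial-basis argument of Proposition~\ref{noncom}: pick an element of negative $z$-degree $(-n)$, reorder it via the commutation relation $z\tau = (\tau+h)z$ into the two families of normal-form monomials analogous to $(z^{-1}\tau)^n\tau^m$ and $(z^{-1}\tau)^a z^{a-n}$, show that the first family survives while the second, after $\Gamma_V$-conjugation, produces monomials that cannot occur in a regular $\bC[\mu,\tau,h]$-combination. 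The obstruction to regularity is again detected by multiplying on the right by an appropriate power of the conjugated generator and reading off a forced linear dependence that is ruled out on degree grounds.

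The main obstacle is the bookkeeping in the negative-winding normal form once $\mathrm{e}_\pm$ are genuine products $\prod_\nu(\mu+\nu)^{|\langle\nu|\eta\rangle|}$ rather than the single factor $\tau$ of the standard representation. The clean linear-dependence argument of Proposition~\ref{noncom} relied on the monomials $z^{-a}(\tau z)^a$ being distinguished by their $\tau$-degree; here one must check that the analogous products, with $\tau$ replaced by shifted Euler classes $\mathrm{e}_\pm$ evaluated along the commutation relation, remain linearly independent over $\bC[\mu,\tau,h]$. I expect this to follow from tracking the top-degree behaviour in $\tau$ (equivalently, the highest winding contribution), since each $\mathrm{e}_\pm(a)$ contributes a known leading $\tau$-power, but verifying that the mass parameter $\mu$ does not create unexpected cancellations is the delicate point. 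The faithfulness remark in Remark~\ref{gammaeuler}, guaranteeing that extension of scalars to holomorphic functions of $h$ off the negative real axis is injective, is what legitimises passing between the formal conjugation statement and the honest ring-theoretic one, and should be invoked to close the argument.
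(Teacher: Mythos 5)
Your handling of the generation claim and of the survival of the generators matches the paper: both follow from \eqref{plusminusconj} and \eqref{eulerpowers} together with the Schubert-stratum computation of Lemma~\ref{CXY}. The gap is in the converse inclusion, and you have located it yourself: the plan to transplant the monomial normal-form and linear-dependence argument of Proposition~\ref{noncom} stalls exactly at the step you call ``delicate'', and the route you suggest for closing it (tracking top $\tau$-degrees) is not how the difficulty is resolved. The paper replaces that apparatus by a short unique-factorisation argument in $\bC[\mu,\tau,h]$: a general element of degree $n>0$ over the Toda base is $z^n f(\mu,\tau,h)$, which conjugates to $\mathrm{e}_+(n)\,z^n\,\mathrm{e}_-(n)^{-1}f$; the linear factors of $z^{-n}\mathrm{e}_+(n)z^n$ all have the form $\mu+k\tau-ph$ with $p>0$, while the factors of $\mathrm{e}_-(n)$ carry non-negative multiples of $h$ alongside $\mu$, so the two collections of linear forms (each monic in $\mu$) are pairwise non-proportional and hence coprime in the UFD $\bC[\mu,\tau,h]$. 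Regularity of the conjugate therefore forces $\mathrm{e}_-(n)\mid f$, which is exactly membership in the subring generated by $z\mathrm{e}_-(1)$ and $z^{-1}\mathrm{e}_+(1)$. No basis of normal-form monomials and no linear-independence verification is needed.

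Note also that your worry about $\mu$ points in the wrong direction: the mass parameter does not threaten to \emph{create} cancellations --- it is precisely what \emph{prevents} them. At $\mu=0$ the factors coming from opposite weights are proportional and do cancel (the ``fake cancellation'' $\mathrm{e}_L\cup\mathrm{e}_{L^\vee}=(-1)^n$ of \S\ref{keyx2}); keeping $\mu$ as a polynomial variable separates all the linear forms and makes the coprimality, and with it the divisibility conclusion, automatic. If you insist on retaining the monomial bookkeeping, the statement you would have to prove to rule out the ``bad'' monomials is exactly this coprimality --- at which point the normal-form machinery becomes redundant.
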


\begin{proof}
From \eqref{plusminusconj} we see that the listed generators survive, and  
\eqref{eulerpowers} shows that their $n$th power generates the summand of degree $\pm n$ 
over the Toda base. 
Fix now $n>0$ say. The need for the $z^n\mathrm{e}_-(n)$ factor in a surviving element 
follows from unique factorisation in the ground ring $\bC[\mu,\tau,h]$. Namely, 
$z^n f(\mu,\tau, h)$ conjugates to $\mathrm{e}_+(n)z^n\mathrm{e}_-(n)^{-1}f$. 
The linear factors of $z^{-n}\mathrm{e}_+z^n$ have the form $(\mu+k\tau -ph)$ with 
$p>0$, and are prime to the denominator $\mathrm{e}_-(n)$, whose factors carry 
non-negative multiples of $h$ going with $\mu$; so all canceling factors  must come from $f$. 
\end{proof}

Localising on the Toda base, we find from the Abelian calculation, formally close to
$h=0$: 
\begin{corollary}
$\Gamma_V$ conjugates the unit module of $\cN\cC^\circ_3$ into a module $E_V$ with 
support $\bar{\vep}_V$.\qed
\end{corollary}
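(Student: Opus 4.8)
The plan is to reduce the assertion to the explicit abelian conjugation computed in the preceding Proposition, and then to read off the support geometrically in the $h\to0$ limit. First I would localize on the Toda base $\frh_\bC/W\times\bC$ to the complement of the root hyperplanes. As in the proof of Proposition~\ref{cap}, the fixed-point theorem reduces the identification of the conjugation operator to the maximal torus, where $V$ decomposes into weight lines and $\Gamma_V$ factors as a product $\prod_\nu\Gamma(\mu+\langle\nu|\xi\rangle;h)$ over the weights $\nu$ of $V$. Each rank-one factor is governed by the $\mathrm{U}_1$ computation just carried out, in which conjugation by $\Gamma_V$ realizes precisely the automorphism $z\mapsto\mathrm{e}_+(1)\,z\,\mathrm{e}_-(1)^{-1}$ of~\eqref{plusminusconj}.

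Next I would identify this automorphism, via Proposition~\ref{cap}, as the quantum lift of $\vep_V$-translation on the group scheme $\cC^\circ_3(G;\mathbf{0})$ over the Toda base: as $h\to0$ the quantum Euler factors $\mathrm{e}_\pm(1)$ pass to their classical counterparts and the automorphism degenerates to multiplication by the index Euler class $\mathrm{e}_V=\mathrm{e}_+\mathrm{e}_-^{-1}$ of~\S\ref{indexsplit}, which is $\vep_V$-translation by Proposition~\ref{cap}. Since the unit module $\cO_1$ is, by construction, the structure sheaf of the identity section of this group scheme, its $\Gamma_V$-conjugate is the structure sheaf of the $\vep_V$-translate of the identity. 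By the very definition~\eqref{eulerdef} of $\vep_V$ as a rational section, that translate is the graph $\bar{\vep}_V$. This both identifies the conjugate with the deformation module $E_V$ and pins its support as $\bar{\vep}_V$.

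The main obstacle I anticipate is making the word ``support'' precise in the formal non-commutative setting and controlling it through the $h\to0$ limit. The class $\Gamma_V$ is only meromorphic, with infinitely many zeros and poles located over the singular divisor $S$ of $\vep_V$; thus conjugation by it is honest only after the scalar extension of Remark~\ref{gammaeuler}, using freeness of the $R$-equivariant homology over $\bC[h]$ and working in the formal neighbourhood of $h=0$. One must then check that, under this extension, the support is computed fibrewise over the Toda base by its classical limit, so that the accumulation of zeros and poles along $S$ reproduces exactly the closure behaviour of $\bar{\vep}_V$ there --- matching the nodal, codimension-$2$ degeneration of~\S\ref{codim2} --- rather than introducing spurious components.
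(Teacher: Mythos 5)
Your proposal is correct and follows essentially the same route as the paper, whose entire argument is the single sentence ``Localising on the Toda base, we find from the Abelian calculation, formally close to $h=0$'': you localize away from the root hyperplanes, reduce by fixed points to the torus where \eqref{plusminusconj} identifies $\Gamma_V$-conjugation with the quantised $\vep_V$-translation, and read off the support $\bar{\vep}_V$ in the formal neighbourhood of $h=0$. Your closing caveat about making ``support'' precise is a fair observation about a point the paper deliberately leaves informal, but it does not mark a divergence in method.
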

\noindent
Away from formal $h=0$, we can define the convolution-inverse module $E_V^{-1}$  as the quotient 
of $\cA_3$ by the annihilator of $\Gamma_V^{-1}$. Sending $1\in\cA_3$ to 
$\Gamma_V^{-1}$ identifies it with $\cO_1$.

\subsection{Description of the $\cN\cC$ spaces.}
Theorems~4 and~5 below are quantum versions of the Lagrangian-shift description of the 
Coulomb branches. The proof follows the commutative argument, with its core relying on 
the Euler interpretation of $\Gamma_V$ (Remark~\ref{gammaeuler}): conjugation by 
$\Gamma_V^{-1}$ becomes capping with (the $R$-equivariant) $\mathrm{e}_V$. 
The capping operation is of course canonical, but the left and right module structures of $\cA_3$ 
over $H^*_{G\times R}$ differ, as they come from left and right pull-backs from $B(G\times R)$ 
to the stack $R\ltimes(G\ltimes LG\rtimes G)$. Of course, this is why $\Gamma_V$-conjugation 
is not trivial. 

\begin{maintheorem}\label{cnc}
The non-commutative deformation  $\cN\cC_3^\circ(G;E)$, defined as 
$H_*^{G\times S^1\times R}(L_V)$, comprises those elements of $H_*^{G\times S^1\times R}(\Omega G)$ 
which survive conjugation by $\Gamma_V^{-1}$. 
\end{maintheorem}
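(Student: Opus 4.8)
The plan is to transplant the filtered argument of \S\ref{proofs} almost verbatim, adjoining the loop-rotation equivariance $R$ throughout and upgrading $\vep_V$-translation --- cap-product with $\mathrm{e}_V$ --- to conjugation by $\Gamma_V^{-1}$. First I would check that the two inputs of \S\ref{proofs} are insensitive to the extra circle: the Schubert stratification of $\Omega^a G$ into $G[\![z]\!]$-orbits is unchanged, and the even-dimensionality of the strata $C_\eta$ still collapses the associated spectral sequence. Thus $\cA_3 := H_*^{G\times S^1\times R}(\Omega G)$ and $\cN\cC_3^\circ(G;E)=H_*^{G\times S^1\times R}(L_V)$ carry ascending Schubert filtrations, free over $\bC[\mu,h]$, whose graded pieces $\mathrm{Gr}_\eta$ are computed exactly as in \S\ref{indexsplit}--\S\ref{assgr}. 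Write $\cA_V\subset\cA_3$ for the subring of elements surviving $\Gamma_V^{-1}$-conjugation, which is the object of the theorem.

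The step that carries the weight, and the one I expect to be the main obstacle, is to make $\Gamma_V^{-1}$-conjugation a genuine, filtration-preserving operation on $\cA_3$ and to pin down its leading symbol. Unlike the rational section $\vep_V$, the class $\Gamma_V$ is transcendental, with infinitely many poles in $\mu,\xi$, so conjugation by it is not literally an inner automorphism. What rescues the operation is the telescoping already visible in \eqref{conj} and \eqref{plusminusconj}: the commutation $z\tau z^{-1}=\tau+h$ collapses the $\Gamma$-shift, on each fixed Schubert degree $\eta$, into the finite Euler-class products $\mathrm{e}_\pm(\eta)$. I would establish this for general $G$ by abelianization --- localising to the complement of the root hyperplanes on the massive Toda base and invoking the fixed-point theorem to reduce to a maximal torus, where the explicit computation generalising \eqref{eulerpowers} applies. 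By the Weierstra{\ss}-product interpretation of $\Gamma_V^{-1}$ as a regularised Euler class (Remark~\ref{gammaeuler}), the leading symbol of $\Gamma_V^{-1}$-conjugation is then precisely capping with the $R$-equivariant $\mathrm{e}_V$, which preserves the Schubert filtration (the non-commutative form of the corollary to Proposition~\ref{cap}). This legitimises the graded survivors $\Sigma_V\subset\mathrm{Gr}\,\cA_3$ and gives $\mathrm{Gr}\,\cA_V\subset\Sigma_V$.

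With this in hand, the two observations of \S\ref{proofs} transcribe directly. For the containment $\cN\cC_3^\circ(G;E)\subset\cA_V$, I would use the two inclusions $\iota_{l,r}\colon L_V\hookrightarrow V[\![z]\!]$ of \S\ref{twoemb} to build two embeddings $\varphi_{l,r}\colon\cN\cC_3^\circ(G;E)\to\cA_3$ by intersecting with the zero section; the non-commutative lift of the index identity $\varphi_l=\mathrm{e}_V\cap\varphi_r$ of \S\ref{twoemb} reads $\varphi_l(f)=\Gamma_V^{-1}\,\varphi_r(f)\,\Gamma_V$. Since $\varphi_l(f)$ is a genuine element of $\cA_3$, the conjugate of $\varphi_r(f)$ is genuine, so $\varphi_r(f)$ survives conjugation; using $\varphi_r$ to present $\cN\cC_3^\circ(G;E)$, this is Observation~(i). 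For Observation~(ii), $\mathrm{Gr}\,\cN\cC_3^\circ(G;E)=\Sigma_V$, I pass to the associated graded, where the symbol of conjugation is $\mathrm{e}_V$-cap; the statement then becomes the commutative identity $\mathrm{Gr}_\eta\cN\cC_3^\circ(G;E)=\mathrm{e}_-(\eta)\cap\mathrm{Gr}_\eta\cA_3$ of \S\ref{assgr}, matching the $\eta$-graded piece of $\Sigma_V$ of \S\ref{indexsplit}.

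Assembling the sandwich $\mathrm{Gr}\,\cN\cC_3^\circ(G;E)\subset\mathrm{Gr}\,\cA_V\subset\Sigma_V=\mathrm{Gr}\,\cN\cC_3^\circ(G;E)$ forces equality on associated gradeds; as $\cN\cC_3^\circ(G;E)\subset\cA_V$ is a filtered inclusion inducing an isomorphism on $\mathrm{Gr}$, the two rings coincide, which is the theorem. The single genuinely new ingredient beyond the commutative case is the analytic bookkeeping surrounding the transcendental conjugation: one must ensure, using the freeness over $\bC[h]$ and the extension of scalars to functions holomorphic off the negative real axis demanded in Remark~\ref{gammaeuler}, that ``surviving conjugation by $\Gamma_V^{-1}$'' is a well-posed integrality condition rather than a formal-$h$ artefact --- and it is precisely the telescoping of the second paragraph that renders this condition finite and checkable one Schubert degree at a time.
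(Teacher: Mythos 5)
Your proposal is correct and follows essentially the same route as the paper: $R$-equivariant versions of the embeddings $\varphi_{l,r}$ of \S\ref{twoemb}, with conjugation by $\Gamma_V^{-1}$ interchanging them (checked generically by abelianisation to the maximal torus), give the containment, and the associated graded of the Schubert filtration is then matched against the graded survivors stratum by stratum. The only cosmetic difference is that the paper pins down the graded survivors by quoting Proposition~\ref{noncom} on each stratum $C_\eta$ (with $z^\eta$ in place of $z$) --- the genuinely non-commutative divisibility argument that you fold into your ``telescoping'' paragraph rather than treating as a purely commutative identity on symbols.
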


\begin{proof} Incorporate the $R$-action in the embeddings $\varphi_{l,r}$ of \S\ref{twoemb}. 
I claim that conjugation switches $\varphi_r$ to $\varphi_l$: this need only be checked 
generically on the Coulomb branch, and can be seen by restriction to the maximal torus, 
reducing to the abelian calculation in Example~II above. 

It follows that $\varphi_r$ places $\cN\cC_3(G;E)$ within the surviving subring, and 
the argument closes by quoting Proposition~\ref{noncom} on each Schubert stratum $C_\eta$, 
with $z^\eta$ in lieu of $z$,  to conclude that $\mathrm{Gr}_\eta\cN\cC_3^\circ(G;E)$ 
exhausts the surviving part of $\mathrm{Gr}_\eta\cN\cC_3^\circ$. 
\end{proof}

\subsection{The space $\cN\cC_4^\circ$.} In the Key Example of $\mathrm{U}_1$, the Pontryagin ring 
$K_*^{\mathrm{U}_1\times R}(\Omega\mathrm{U}_1)$ is the standard non-commutative (complexified) torus, 
$\bC[q^\pm]\langle t^\pm,z^\pm\rangle$ with relation $zt=qtz$. To proceed, we need Jackson's 
$p$-Gamma function \cite{jack}. In terms of $p$-Pochhammer symbols $(x;p)_\infty = 
\prod_{n\ge 0}(1-xp^n)$, convergent for $|p|<1$, this is 
\[
\Gamma_p(w;h) = (1-p)^{1-w/h}\frac{(p^h;p^h)_\infty}{(p^w;p^h)_\infty}, \quad\text{satisfying}
\quad \Gamma_p(w+h;h) = \frac{1-p^w}{1-p}\Gamma_p(w;h). 
\]
The requisite version of $\Gamma_p$ arises in the limit $p,h\to 0$, as the expansion variables 
$q:=p^{-h}$, $t:=p^{-w}=p^{-\tau}$ are kept finite.\footnote{The signs keep the series convergent for $|q|>1$, matching $h>0$ in the additive case.}  Set
$\Gamma_0(t) := (q^{-1};q^{-1})_\infty/(t^{-1};q^{-1})_\infty$ and note the conjugation
\[
\Gamma_0(t)\cdot z\cdot\Gamma_0(t)^{-1}= (1-t^{-1})^{-1}z,
\]
with the $K$-theoretic Euler class $(1-t^{-1})$ replacing $\tau$ in \eqref{conj}.

\begin{remark}
In analogy with Remark~\ref{stirling}, the Laplace transform of our solution ${\Gamma}_0$ 
is expressed in terms of the $q$-exponential function $e_q$, namely $e_q\left(\frac{z}{1-q}\right) =\left(z;q\right)_\infty^{-1}$.   
\end{remark}

Define now the multiplicative class $\Gamma_{0;V}$ for vector bundles, valued in localised 
equivariant $K$-theory, as in \S\ref{gamma}; formally, near $q=1$, 
we then have 
\begin{proposition}
$\Gamma_{0;V}$ conjugates the unit module of $\cN\cC_4^\circ$ into a module $\Lambda_V$ with 
support $\lambda_V$. \qed
\end{proposition}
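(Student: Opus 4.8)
The plan is to transcribe the cohomological argument that yielded the $\cC_3$ corollary (conjugation by $\Gamma_V$ sending $\cO_1$ to $E_V$) into the finite-difference setting, replacing the additive functional equation of $\Gamma$ by Jackson's multiplicative one. First I would fix the class $\Gamma_{0;V}$ exactly as the $\Gamma_V$-class was built in \S\ref{gamma}: for $V$ with weights $\nu$ taken with multiplicity, set $\Gamma_{0;V} = \det\nolimits_V\Gamma_0$, the product of $\Gamma_0$ over the Chern roots of the bundle attached to $V$, with the mass parameter $m$ inserted so that the reciprocal class is meromorphic in $m$ and hence defined on virtual bundles. Since the $p$-Pochhammer symbols converge for $|q|<1$, this class lives in the localized equivariant $K$-theory, holomorphically completed in the disk $|q|<1$; this is the natural home for $\cN\cC_4^\circ$ near $q=1$, matching Remark~\ref{gammaeuler}.

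Next I would split $V = V_+\oplus V_0\oplus V_-$ according to the sign of the relevant weight and write $\Gamma_{0;V} = \Gamma_{0;+}\Gamma_{0;-}$, the invariant part $V_0$ being inert. The heart of the proof is the finite-difference analogue of \eqref{plusminusconj}. Using the commutation $zt = qtz$ and the functional equation behind the displayed identity $\Gamma_0(t)^{-1}z\Gamma_0(t) = (1-t^{-1})z$, conjugation by $\Gamma_{0;V}$ carries $z$ past each Chern-root factor and assembles the $K$-theoretic index Euler classes, giving
\[
\Gamma_{0;V}^{-1}\,z\,\Gamma_{0;V} = \lambda_+(\eta)\cdot z\cdot\lambda_-(\eta)^{-1},
\]
where $\lambda_\pm(\eta)$ are the $q$-deformations of the $\mathrm{e}_\pm(\eta)$ of \S\ref{indexsplit} (assembled over $\eta$ as in \eqref{eulerpowers}). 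At $\eta = 1$ the product $\lambda_+\lambda_-^{-1}$ is precisely the weight product $\prod_\nu(1-(mx^\nu)^{-1})^\nu$ defining $\lambda_V$ in \eqref{eulerdef}.

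This computation is abelian, so for a general group I would localize on the Toda base away from the root hyperplanes and invoke the fixed-point reduction to the maximal torus already used in the proof of Theorem~\ref{qh} and throughout \S\ref{proofs}; working formally near $q=1$ keeps the transcendental factors convergent. Conjugating the unit module $\cO_1$ (cut out by $z=1$) by $\Gamma_{0;V}$ then produces the cyclic $\cA_4$-module $\Lambda_V$ defined by $z = \lambda_+\lambda_-^{-1} = \lambda_V$; viewed as a sheaf over the Toda base, it is supported on the graph $\bar\lambda_V$ of $\lambda_V$, as claimed.

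The step I expect to be the main obstacle is the bookkeeping for the multiplicative $\Gamma_0$-class. Unlike the polynomial classes $\mathrm{e}_\pm$, the $q$-Gamma factors are transcendental, with infinitely many zeros and poles supplied by the Weierstrass-type product of Remark~\ref{gammaeuler}; so both the conjugation identity and the support statement are cleanest only after localization and within the $|q|<1$ completion. One must check that this product regularizes the infinite Taylor-loop space $V[\![z]\!]$ compatibly with the mass regularization, and that specializing $m$ recovers $\lambda_V$ rather than a premature (fake) cancellation of the kind flagged for $L\oplus L^\vee$ in Example~\ref{keyx2}. Everything else is a formal replay of the $\cC_3$ case.
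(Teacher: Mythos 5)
Your proposal is correct and follows essentially the same route the paper intends: the statement carries a \qed because it is meant to be immediate from the single-weight conjugation identity $\Gamma_0(t)^{-1}\,z\,\Gamma_0(t)=(1-t^{-1})z$, assembled over the weights of $V$ exactly as in \eqref{plusminusconj}, and then localised on the Toda base to reduce the general group to the maximal torus. Your extra remarks on the $|q|<1$ completion and the mass regularisation are sensible but not part of the paper's (implicit) argument.
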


Finally, the argument used for $\cN\cC_3$ applies, after working locally on the Toda base, to give

\begin{maintheorem}\label{cnck}
The non-commutative deformation  $\cN\cC_4^\circ(G;V\oplus V^\vee)$, defined by 
$K_*^{G\times S^1\times R}(L_V)$, comprises those elements of $K_*^{G\times S^1\times R}(\Omega G)$ 
which remain regular after conjugation by ${\Gamma}_{0;V}^{-1}$. \qed
\end{maintheorem}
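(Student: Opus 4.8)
The plan is to transcribe the proof of Theorem~\ref{cnc} into $K$-theory, replacing the additive Pontryagin ring by the non-commutative torus $\bC[q^\pm]\langle t^\pm,z^\pm\rangle$ and the $h$-periodic $\Gamma$ by Jackson's $\Gamma_0$. First I would incorporate the loop-rotation $R$-action into the two $K$-theoretic embeddings $\varphi_{l,r}:K_*^{G\times S^1\times R}(L_V)\rightarrowtail K_*^{G\times S^1\times R}(\Omega G)$ arising, as in \S\ref{twoemb}, from the inclusions $\iota_{l,r}:L_V\hookrightarrow V[\![z]\!]$; intersecting with the zero-section turns each into a graded ring homomorphism, and the Euler interpretation of Remark~\ref{gammaeuler} expresses $\varphi_l$ as $\varphi_r$ capped with the ($R$-equivariant) $K$-theoretic index class. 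The first key step is then to verify that conjugation by $\Gamma_{0;V}^{-1}$ carries $\varphi_r$ to $\varphi_l$. As in the commutative case this is a generic statement over the Toda base, so by the fixed-point theorem I may restrict to the maximal torus, where it reduces to the $\mathrm{U}_1$ identity $\Gamma_0(t)^{-1}z\Gamma_0(t)=(1-t^{-1})z$ together with the multiplicativity of $\Gamma_{0;V}$ over the weights of $V$, the $q$-analogue of \eqref{plusminusconj}.

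Granting this, $\varphi_r$ places $\cN\cC_4^\circ(G;E)$ inside the subring surviving conjugation by $\Gamma_{0;V}^{-1}$, settling one inclusion. For the reverse, I would pass to the associated graded for the Schubert stratification of $\Omega^aG$: on the stratum $C_\eta$ the computation localises, with $z^\eta$ in lieu of $z$, to the abelian $\mathrm{U}_1$ problem. I therefore need the $K$-theoretic analogue of Proposition~\ref{noncom}, namely that the subring of $\bC[q^\pm]\langle t^\pm,z^\pm\rangle$ surviving $\Gamma_0^{-1}$-conjugation is exactly the one generated by the quantum Euler factors $z(1-t^{-1})$ and their reciprocals. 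This should follow by the same reordering-and-factorisation argument, with the additive shifts $\tau-kh$ replaced by the $q$-shifted factors $(1-t^{-1}q^{k})$: conjugation converts a putative surviving monomial of winding degree $(-n)$ into a product of such factors, and one rules out the spurious monomials by unique factorisation in $\bC[q^\pm][t^\pm]$, the role formerly played by the $\tau$-degree now taken over by the distinctness of the roots $t=q^{k}$. Matching the two graded pieces then closes the theorem exactly as for $\cN\cC_3$.

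The main obstacle is analytic rather than combinatorial. Unlike the polynomial $\Gamma(\tau;h)$, the solution $\Gamma_0$ is built from the infinite product $(t^{-1};q)_\infty$, convergent only for $|q|<1$; conjugation by $\Gamma_{0;V}^{-1}$ is therefore a bona-fide operation only after passing to the function ring $\cA_4$ and working locally on the Toda base, away from the formal neighbourhood of $q=1$. Controlling this — ensuring both that the $q$-shifted roots $\{q^{k}\}$ remain distinct (which fails at roots of unity, and so forces the genericity) and that the Weierstra{\ss}-type product defining $\Gamma_{0;V}$ converges well enough to represent the index Euler class faithfully (cf.~Remark~\ref{gammaeuler}) — is where the real work lies; once it is in place, the algebra of the surviving subring is a routine $q$-deformation of the $\cN\cC_3$ computation.
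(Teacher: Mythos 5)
Your proposal is correct and follows exactly the route the paper intends: the paper's own proof of Theorem~\ref{cnck} consists of the single remark that ``the argument used for $\cN\cC_3$ applies, after working locally on the Toda base,'' and you have simply written out that transcription — the two embeddings $\varphi_{l,r}$, the generic check on the maximal torus via the $q$-analogue of \eqref{plusminusconj}, and the stratum-by-stratum reduction to the $\mathrm{U}_1$ reordering argument. Your closing caveat about $\Gamma_0$ being an honest operation only on the function ring $\cA_4$ away from formal $q=1$ matches the paper's own qualification.
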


\section*{Appendix: A Weyl character formula for certain Coulomb branches}
Here, I verify the abelianisation result mentioned in the introduction, which describes `most' 
Coulomb branches for $G$ in terms of those for the Cartan subgroup, with their Weyl group symmetry. There is also a non-commutative version, as in \S\ref{noncomgen}; I will return to it in a future paper. 

\begin{maintheorem} For any representation $V$ of $G$ whose weights contain the roots of $\frg$, we have
\[
\cC_{3,4}(G;E)\cong \cC_{3,4}\left(H;E\ominus (\frg/\frh)^{\oplus 2})\right)/W, 
\]
compatibly with the embeddings of \S\ref{twoemb} and the morphism $\cC_{3,4}(G;\mathbf{0}) \to 
\cC_{3,4}(H;\mathbf{0})/W$.  
\end{maintheorem}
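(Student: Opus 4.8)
The plan is to reduce everything to the explicit presentation of Corollary~\ref{explicit}, to work with the massive spaces $\cC^\circ$ (specialising $\mu=0$, resp.\ $m=1$, only at the very end, since the argument genuinely needs a mass turned on --- cf.\ the fake cancellation flagged in §\ref{keyx2}), and then to rerun the associated-graded computation of §\ref{proofs}, the sole new ingredient being a Weyl-denominator identity. Write $R_H=\bC[T^\vee H^\vee_\bC][\mu]$ and $R_G=R_H[(\mathrm{e}^{\alpha^\vee}-1)/\alpha]$. By Corollary~\ref{explicit}, $\bC[\cC^\circ_3(G;E)]$ is the ring of $W$-invariants of $\{f\in R_G:\ T_{\vep_V}f\in R_G\}$, where $T_{\vep_V}$ denotes $\vep_V$-translation; and, since $V'=V\ominus\frg/\frh$ is a genuine representation by the hypothesis that the weights of $V$ contain the roots, $\bC[\cC^\circ_3(H;E')/W]$ is the ring of $W$-invariants of $\{g\in R_H:\ T_{\vep_{V'}}g\in R_H\}$. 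Both are $W$-invariant subrings of the common fraction field $\mathrm{Frac}(R_H)$, and the abelianisation morphism $\cC_3(G;\mathbf{0})\to\cC_3(H;\mathbf{0})/W$ is precisely the inclusion $R_H^W\subset R_G^W$; so proving the two subrings equal also delivers the asserted compatibility with that morphism and, through Proposition~\ref{cap} (whose proof localises to the torus), with the embeddings $\varphi_{l,r}$ of §\ref{twoemb}.

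Next I would isolate the content. As the weights of $V$ are those of $V'$ together with the roots, the sections multiply, $\vep_V=\vep_{V'}\cdot\vep_{\frg/\frh}$, whence $T_{\vep_V}=T_{\vep_{V'}}\circ T_{\vep_{\frg/\frh}}$. Thus everything comes down to the adjoint factor: I must show that, after taking $W$-invariants, translation by $\vep_{\frg/\frh}$ converts ``regular in $R_G$'' into ``regular in $R_H$,'' i.e.\ that it trades the resolution functions $(\mathrm{e}^{\alpha^\vee}-1)/\alpha$ for honest torus functions, thereby identifying the $\vep_V$-survival condition in $R_G$ with the $\vep_{V'}$-survival condition in $R_H$ at the level of $W$-invariants.

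This I would prove stratum by stratum, reusing the set-up of §\ref{indexsplit}--\ref{assgr}. Over the Schubert cell of a coweight $z^\eta$ the surviving ring has graded piece $\mathrm{e}_-^V(\eta)\cap H^{Z(\eta)\times S^1}_*(\mathrm{point})$, and I would factor $\mathrm{e}_-^V(\eta)=\mathrm{e}_-^{V'}(\eta)\cdot\mathrm{e}_-^{\frg/\frh}(\eta)$ by splitting $V=V'\oplus\frg/\frh$. By Proposition~\ref{cap} the adjoint factor is the (negative part of the) index Euler class of $\frg/\frh$, and the crux is a Weyl character formula in disguise: this class is exactly the Weyl denominator that mediates between the centraliser description $H^{Z(\eta)\times S^1}_*(\mathrm{point})$, which carries the resolution functions, and the plain torus term $H^{H\times S^1}_*(\mathrm{point})$, once one assembles the $W$-orbit of $\eta$ and extracts $W_\eta=\mathrm{Stab}_W(\eta)$-invariants. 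Hence the $\eta$-graded pieces of the two $W$-invariant rings coincide; the Schubert spectral sequences collapse by even-dimensionality and the rings are free over the Toda base (§\ref{proofs}), so the equality lifts from the associated graded to the filtered rings, giving $\bC[\cC^\circ_3(G;E)]=\bC[\cC^\circ_3(H;E')/W]$. Normality of both sides together with the codimension-$2$ control of the indeterminacy locus $I$ from §\ref{codim2} then promotes this ring equality to the asserted isomorphism of normal affine schemes over the Toda base, and specialising to $\mu=0$ by flatness yields the stated $\cC_3(G;E)$. The case of $\cC_4$ is identical, with $\lambda_V$, the multiplicative resolution functions $(\mathrm{e}^{\alpha^\vee}-1)/(\mathrm{e}^\alpha-1)$, the $K$-theoretic strata and the multiplicative index classes replacing their $\cC_3$ counterparts, and the orbifold factor $\pi$ carried along formally.

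The main obstacle is this crux identity. Establishing that the adjoint index Euler class $\mathrm{e}_-^{\frg/\frh}(\eta)$ is precisely the Weyl-denominator factor implementing the passage from the Levi centraliser $Z(\eta)$, with its resolution functions, to the bare torus --- uniformly in $\eta$, and in particular across the walls where $Z(\eta)$ and $W_\eta$ jump --- is the heart of the matter, and it is where the hypothesis that $V$ contain the roots is forced, so that $\mathrm{e}_-^{\frg/\frh}$ enters with non-negative multiplicities. Throughout, one must keep the mass $\mu$ (resp.\ $m$) switched on: at $\mu=0$ the $\frg/\frh$-factors degenerate in the ``fake'' manner of §\ref{keyx2}, and the bookkeeping that matches denominators against matter collapses.
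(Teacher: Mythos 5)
Your architecture --- reduce to Corollary~\ref{explicit}, factor $\vep_V=\vep_{V'}\cdot\vep_{\frg/\frh}$, and compare the two survival subrings stratum by stratum through the Schubert filtration --- is genuinely different from the paper's proof, which is a short local-to-global argument: both algebras are free $\cO$-modules over the Toda base $\frh_\bC/W$, they agree away from the root hyperplanes by $H$-fixed-point localisation, they agree generically on each root hyperplane by the rank-one computation of \S\ref{su2}, and agreement away from codimension~$2$ plus freeness forces equality. Measured against either route, your version has two genuine problems. The first is that the ``crux identity'' --- that $\mathrm{e}_-^{\frg/\frh}(\eta)$ is exactly the Weyl-denominator factor mediating between $H_*^{Z(\eta)\times S^1}(\mathrm{point})$ and $\bigl(H_*^{H\times S^1}\bigr)^{W_\eta}$, uniformly in $\eta$ --- is the entire content of the theorem, and you name it rather than prove it. It is not routine: the Schubert filtrations on the two sides do not match (the degrees differ by $\langle 2\rho,\eta\rangle$), and under localisation the class $[C_\eta]$ spreads over the $W$-orbit of $\eta$ with normal-bundle Euler classes (products of roots, carrying no $\mu$) in the denominators, so the matching against $\mathrm{e}_-^{\frg/\frh}(\eta)=\prod_\alpha(\mu+\alpha)^{|\langle\alpha|\eta\rangle|}$ is not a literal cancellation.

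The second problem is more serious: you set everything up with the mass on and assert that the two massive survival subrings of $\mathrm{Frac}(R_H)$ are literally equal, specialising to $\mu=0$ only at the end ``by flatness.'' That equality is false. Take $G=\mathrm{SU}_2$ and $V=\frg$: then $V'=V\ominus\frg/\frh$ is trivial, $\vep_{V'}=1$, and the abelian side is all of $R_H^W$, which contains $z+z^{-1}$; but $\vep_{\frg}$-translation multiplies $z$ by the non-constant ratio $\phi_+/\phi_-$ of coprime polynomials in $(\mu,\tau)$, so it sends $z+z^{-1}$ to an element with uncancelled poles along $\phi_\pm=0$, and $z+z^{-1}$ does \emph{not} survive on the $G$-side for $\mu\neq0$. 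This is visible in \eqref{gens}: only after setting $\mu=0$ does the surviving ring acquire the generator $\tau^{N-2}(z+(-1)^Nz^{-1})$. The theorem is irreducibly a statement about the massless specialisation, and your closing claim that the bookkeeping ``collapses'' at $\mu=0$ has the logic reversed --- $\mu=0$ (resp.\ $m=1$) is precisely where the two sides come together. As written, the plan sets out to prove a false statement about the massive rings; it would have to be rerun for the massless rings directly, or the comparison weakened to one that only closes up at $\mu=0$.
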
 
\begin{proof}
Working over the common bases $\frh/W$ and $H/W$, $H$-fixed point localisation shows that the map induced 
by the named morphisms is an isomorphism away from the root hyperplanes; whereas, generically on 
the root hyperplanes, the $\mathrm{SL}_2$ calculation of \S\ref{su2} confirms isomorphy. 
This settles the matter, because the algebras are free $\cO$-modules over the Toda base and agree 
in co-dimension $2$. 
\end{proof}
\stepcounter{section}
\begin{remark}
The calculation for $\cC_3$ of $\mathrm{SL}_2$ was seen to hold more generally, for all 
but a few choices of $E$. This generalizes to all groups, by the argument above: however, the 
formulation of the right-hand side needs more care. Exploiting the local descriptions of 
the $\cC_4$ Toda bases in terms of 
$\cC_3$, one can then push this to an awkward but effective calculation of most $\cC_4$ Coulomb branches. 
It would be truly useful to find the formulation which dispenses with all constraints on $E$: 
this might allow an abelianised calculation of Coulomb branches with non-linear matter.
\end{remark}

\end{document}